\documentclass{amsart}
\usepackage{graphicx} % Required for inserting images
\usepackage{amsmath}
\usepackage{amsfonts}
\usepackage{amssymb}
\usepackage{amsthm}
\usepackage{mathtools} % added to make \psmallmatrix work.
\usepackage[capitalize]{cleveref}
\usepackage{fullpage}
\usepackage{amsrefs}
\BibSpec{misc}{
+{}{\PrintAuthors}   {author}
+{,}{ \textit}        {title}
+{,}{ }              {publisher}
+{}{ \parenthesize}                  {date}
+{, }{}              {edition}
+{.}{}              {transition}
+{ url: }{}         {url}
+{}{}               {review}
}

\usepackage{float}

\newcommand{\pmx}[4]{\begin{pmatrix} #1 & #2 \\ #3 & #4 \end{pmatrix}}

\newcommand{\mtwo}{M_2(\mathbb{N}_0)^\bullet}
\newcommand{\B}{\mathcal{B}}
\newcommand{\Bstar}{\mathcal{B}^*}
\newcommand{\absdet}[1]{\left|\det\left( #1 \right)\right|}

\newtheorem{theorem}{Theorem}[section]
\newtheorem*{theorem*}{Theorem}
\newtheorem{lemma}[theorem]{Lemma}
\newtheorem{prop}[theorem]{Proposition}
\newtheorem{cor}[theorem]{Corollary}
\newtheorem*{cor*}{Corollary}

\title{Atoms in the Semigroup of Non-Negative Integer Matrices}
\author{Lindsay Dever}
\address[L. Dever]{Department of Mathematics\\Millersville University\\Millersville, PA 17551, USA} 
\email{Lindsay.Dever@millersville.edu}
\author{Eva G. Goedhart}
\address[E.~G. Goedhart]{Department of Mathematics\\Bryn Mawr College\\Bryn Mawr, PA 19010, USA} 
\email{egoedhart@brynmawr.edu}
\author{Gregory S. Heilbrunn}
\address[G. Heilbrunn]{Independent Researcher\\Wheatley Heights, NY 11798, USA} 
\email{gregoryheilbrunn@gmail.com}
\author{Tony~W.~H.~Wong}
\address[T.~W.~H.~Wong]{Department of Mathematics\\Kutztown University of Pennsylvania\\Kutztown, PA 19530, USA} 
\email{wong@kutztown.edu}

\subjclass[2020]{Primary 15A23, 20M13; Secondary 11Y05}
\keywords{Factorizations, matrix semigroups, atoms}
 
\date{}

\begin{document}
\begin{abstract}
In the semigroup $\mtwo$, two-by-two matrices with non-negative integer entries and non-zero determinant, we study the factorization of matrices into atoms, or irreducible matrices. In 2022, Baeth et al.\  listed some fundamental classes of atoms in $\mtwo$; however, the factorability of most matrices in $\mtwo$ remains unknown. We identify two additional classes of atoms: a class of atoms with determinant $p$, $2p$, or $4p$, for  $p$ prime, and a class of atoms in which the main diagonal is much ``larger" than the off-diagonal (or vice versa).
Finally, we show that bisymmetric matrices with relatively prime  entries are a divisor-closed subset of $\mtwo$ and use a factor search algorithm to classify bisymmetric atoms of $\mtwo$ with minimum entry up to 4000.

\end{abstract}

\maketitle

\section{Introduction}

In 1963, Cohn~\cite{Cohn63} extended the concept of unique factorization to a non-commutative setting. In particular, semigroups of matrices offer a setting where factorization is not necessarily unique. For example, Jacobsen and Wisner~\cite{JW86} found that factorization is non-unique in the semigroup of two-by-two positive integral matrices with determinant one and is unique in the semigroup of two-by-two non-negative integral matrices with determinant one.

While a semigroup does not have prime elements in a traditional algebraic sense, we can study the {\it atoms}, or irreducibles, which do not factor  unless one factor is a unit. (See Section \ref{sect: background} for detailed definitions.) In early studies of two-by-two  matrix semigroups with positive integer entries, Chuan and Chuan~\cite{ ChuanChuan86, ChuanChuan1985} classified all atoms in the semigroup of determinant one matrices, and they found all atoms with prime determinant in the semigroup with positive determinant.

In 2006, Geroldinger and Halter-Koch~\cite{GHK06} summarized the modern theory of non-unique factorization. Baeth et al.~\cite{Baeth11, Baeth21} applied many of these concepts to the semigroup of nonnegative integer matrices with non-zero determinant, $M_n(\mathbb{N}_0)^\bullet$.  Specifically, from~\cite{Baeth21} we know that factorization in $\mtwo$ is non-unique; in addition, no atom is {\it prime-like} in the sense that an atom may appear with different multiplicities in factorizations of the same matrix. A complete classification of the atoms of $\mtwo$ is unknown; however, Baeth et al.~\cite{Baeth21} found the following classes of atoms in $\mtwo$.

\begin{theorem}\label{TheoremBaethAtoms}
\cite[Theorem 4.6, Prop. 4.8]{Baeth21}
The following are atoms of $\mtwo$:
    \begin{enumerate}
        \item $\pmx{1}{1}{1}{0}, \pmx{1}{1}{0}{1}, \pmx{1}{0}{1}{1}, \pmx{0}{1}{1}{1}$,
        \item $\pmx{p}{0}{0}{1}, \pmx{0}{p}{1}{0}, \pmx{0}{1}{p}{0}. \pmx{1}{0}{0}{p}$, for $p$ prime,
        \item $\pmx{w}{x}{y}{z}, \pmx{x}{w}{z}{y}, \pmx{x}{z}{w}{y}, \pmx{z}{x}{y}{w}$, with $w \in \{1,2,3\}$, $\gcd{(wz, xy)}=1$, and $w\leq z<x,y$,
        \item $\pmx{x}{x+1}{x+1}{x},\pmx{x+1}{x}{x}{x+1}$, where $x\in \mathbb{N}$ and $2x+1$  is prime.
    \end{enumerate}
\end{theorem}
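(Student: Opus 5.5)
The plan has three stages. First set up the basic tools. Next reduce the four lists to one representative each by symmetry. Finally treat each representative class separately.

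\textbf{Tools.} The units of $\mtwo$ are the permutation matrices $I$ and $P=\pmx{0}{1}{1}{0}$. Conjugating by $P$, multiplying on the left by $P$, and multiplying on the right by $P$ all send atoms to atoms, and so does transposition (since $(AB)^T=B^TA^T$). These maps turn $\pmx{w}{x}{y}{z}$ into the other three matrices in item (3), and they do the same for the lists in items (1), (2) and (4). So it suffices to show that one representative of each item is an atom. The other tool is a constraint on any nontrivial factorization $M=AB$ with non-unit $A,B$. We have $|\det A|\,|\det B|=|\det M|$. Moreover, since all entries are non-negative, each entry of $M$ bounds the corresponding sums of products entrywise. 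In particular, every row of $A$ and every column of $B$ is non-zero.

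\textbf{Items (1) and (2).} For (1), take $M=\pmx{1}{1}{0}{1}$ (or $\pmx{1}{1}{1}{0}$). The determinant is $\pm1$, so $\det A,\det B=\pm1$. Because the entries of $M$ are tiny, $A$ and $B$ must be $0/1$ matrices. If neither factor is a permutation matrix, then each has a row or column with two $1$'s. Enumerating the few $0/1$ matrices of determinant $\pm1$ then shows that one of the entries of $AB$ is at least $2$, or that the zero pattern is wrong. For (2), take $M=\pmx{p}{0}{0}{1}$. Since $p$ is prime, one factor, say $A$, has determinant $\pm1$. The zero entries of $M$ force, in each product, the relevant row of $A$ and column of $B$ to have disjoint supports. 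This makes $A$ and $B$ monomial, i.e.\ diagonal or antidiagonal. A monomial matrix with determinant $\pm1$ and non-negative integer entries is a permutation matrix, and hence a unit.

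\textbf{Item (4).} Take $M=\pmx{x+1}{x}{x}{x+1}$, with $\det M=2x+1=q$ prime. Then one factor, say $A$, has $|\det A|=1$ and is not a permutation matrix. Such an $A$ is a product of the elementary matrices $\pmx{1}{1}{0}{1}$ and $\pmx{1}{0}{1}{1}$ (and $P$). Peeling off one elementary factor, it is enough to show that neither $E^{-1}M$ nor $ME^{-1}$ has non-negative entries for these $E$, up to $P$. Indeed, $\pmx{1}{-1}{0}{1}M=\pmx{1}{-1}{x}{x+1}$, which has a negative entry, and the other cases are symmetric. This works because $M$ is a ``balanced'' matrix: neither row dominates the other entrywise, and neither column does. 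That is exactly the criterion for a unimodular left or right factor to exist.

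\textbf{Item (3), the main obstacle.} Take $M=\pmx{w}{x}{y}{z}$ with $w\in\{1,2,3\}$ and $w\le z<x,y$. Write $A=\pmx{a}{b}{c}{d}$ and $B=\pmx{e}{f}{g}{h}$, so that $w=ae+bg$, $x=af+bh$, $y=ce+dg$, $z=cf+dh$. I would split by the small value $w\le 3$, which leaves only a few patterns for $(a,b,e,g)$.

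If one of the products, say $bg$, is $0$, then either $b=0$ or $g=0$, and the matrix is triangular-like. In that case $z\ge dh$ and $x=af$ with $a\mid w$. Combining with $z<x$ and $z<y$, and with $\gcd(wz,xy)=1$, one then derives that $a$ or $e$ divides both $wz$ and $xy$, or else that $A$ or $B$ is a unit.

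The remaining cases are $ae+bg=w$ with both terms positive, which is possible only for $w=2,3$. These need the inequality $z<\min(x,y)$ to get a contradiction, for example by the rearrangement estimate $xy-wz=\det$-type identities comparing $af\cdot ce$ with $cf\cdot ae$.

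I expect this case analysis to be the hardest part, in particular making the gcd hypothesis bite. The idea is that a common factor of the entries of a column of $B$ divides both $w$ and $y$. I will first try to show that in every non-unit case some prime divides both $wz$ and $xy$.
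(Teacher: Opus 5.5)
\textbf{Gap: item (3) is not proved.} The paper does not prove this theorem; it quotes it from Baeth et al., so your proposal has to stand on its own. Your symmetry reduction and your arguments for items (1), (2) and (4) are sound. For (4), peeling a unimodular factor off a doubly-balanced matrix is the mechanism of \cref{lem:balanced}; item (4) is also a special case of \cref{thm:absdetp}.

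Item (3) is the substantive part, and your sketch would not close as written, for two reasons.
\begin{itemize}
\item Your zero-case analysis only covers $ae=0$ or $bg=0$. A zero among $c,d,f,h$ is never treated, yet your positive case implicitly needs $c,d,f,h\ge 1$.
\item Your closing plan, exhibiting a prime that divides both $wz$ and $xy$ in every non-unit case, cannot work when all entries are positive. For example, $\pmx{1}{1}{1}{2}\pmx{1}{1}{1}{2}=\pmx{2}{3}{3}{5}$ has positive factor entries and $\gcd(wz,xy)=\gcd(10,9)=1$. What excludes it is the order condition ($z=5>x$), not the gcd.
\end{itemize}
The ``rearrangement estimate'' is never actually specified.

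\textbf{What is missing: use the two hypotheses in separate steps.}

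First, use the gcd condition together with double balancedness to show that all eight entries of $A$ and $B$ are positive; this is exactly \cref{LemmaNoZeroes}. For instance, suppose $a=0$.
\begin{itemize}
\item Then $b\mid\gcd(w,x)$, which divides $\gcd(wz,xy)=1$, so $b=1$.
\item If $d\ge 1$, then row 2 of $M$ dominates row 1, so $d=0$.
\item Now $c\mid\gcd(y,z)=1$, so $c=1$ and $A=J$.
\end{itemize}
The other positions are handled the same way, using transposition for $B$.

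Second, use $w\le 3$ and the order condition.
\begin{itemize}
\item Since all entries are positive, $w=ae+bg\ge 2$, which rules out $w=1$.
\item For $w\in\{2,3\}$, either $ae=bg=1$ or $\{ae,bg\}=\{1,2\}$. In every subcase this forces $a=b=1$ or $e=g=1$.
\item If $a=b=1$, then $z=cf+dh\ge f+h=x$. If $e=g=1$, then $z\ge c+d=y$. Either contradicts $z<x,y$.
\end{itemize}
This is precisely where $w\le 3$ is needed. The matrix $\pmx{4}{5}{5}{4}=\pmx{1}{2}{2}{1}\pmx{2}{1}{1}{2}$ satisfies every other hypothesis with $w=4$, and there $a=1$, $b=2$, $e=2$, $g=1$.
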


In Proposition \ref{PropBaethAtoms}, Baeth et al.~\cite{Baeth21} provides additional criteria for us to find other atoms of the semigroup. In particular, all other atoms not described in Theorem \ref{TheoremBaethAtoms} must have {\it relatively prime adjacent entries} and must be {\it doubly-balanced}, meaning not reducible by row or column operations. 

We found additional classes of atoms in $\mtwo$ within those criteria, the first of which we found by investigating determinants. We show in Theorems~\ref{thm:absdetp} and \ref{thm:absdet32} that these restrictions imply matrices with certain determinants are atoms. In particular, a doubly-balanced matrix $X$ with relatively prime adjacent entries is an atom when $$\absdet{X}\in \{16,32\}\cup \{p,2p,4p \mid p \text{ prime}\}.$$

We also specialize to the set of bisymmetric matrices and search for bisymmetric atoms in the larger semigroup of $\mtwo$. If there is a common factor between the entries of a bisymmetric matrix, then the matrix will factor trivially; therefore we consider the subset of bisymmetric matrices with relatively prime adjacent entries, which we denote by
\[\B^*=\left\{\pmx{x}{y}{y}{x}\in \mtwo:\gcd(x,y)\right\}.\]

The set of bisymmetric matrices is not divisor-closed; that is, the factors of bisymmetric matrices are not necessarily bisymmetric. However, when we consider the subset of bisymmetric matrices with relatively prime adjacent entries, we prove that the factors must also be bisymmetric.
\newtheorem*{thm51}{\cref{thm:bisym factors}}
\begin{thm51} Let $X=\pmx{x}{y}{y}{x}\in \Bstar$. If $X$ factors into $X=AB$, where $A,B \in \mtwo$, then $A,B \in \Bstar$. 
\end{thm51}
\noindent This shows that the set $\B^*$ is divisor-closed in $\mtwo$. Note that it is a necessary condition that $X$ has relatively prime adjacent entries; see equation \eqref{eq1factor} for an example of a bisymmetric matrix that factors into non-bisymmetric factors.

This allows us to consider a smaller subset of possible factors when determining if a bisymmetric matrix is an atom. We use an algorithm to search potential factors and classify all bisymmetric atoms in $\mtwo$ with minimum up to $4000$. Critically, the assumption that factors must be bisymmetric greatly simplifies the search for factors. For a fixed minimum value, we give lists of factorizations found; all other bisymmetric matrices with this minimum are atoms.

\newtheorem*{thm71}{\cref{thm: algorithm}}
\begin{thm71}
Let $X=\pmx{x}{y}{y}{x} \in \Bstar$ with $\min\{x,y\}\leq 4000$. The matrices listed in \cite{BisymData26} (and their associates) are not atoms in $\mtwo$. If $X$ and its associate matrix $\pmx{y}{x}{x}{y}$ do not appear in \cite{BisymData26}, then $X$ is an atom.
\end{thm71}

In \cref{sec: upper bound}, we discover an additional class of atoms using bounds on the diagonal or off-diagonal entries. In particular, in \cref{wxyzupperbound2},  we show that for a doubly-balanced, factorable matrix $X = \pmx{w}{x}{y}{z}$ satisfying $\gcd(wz, xy)=1$, then 
$$x,y\le2(z-1)(w-1)\qquad\text{ and}\qquad
w, z \leq 2(x-1)(y-1).$$
This leads immediately to a new class of atoms where this bound is not satisfied.
\newtheorem*{cor1}{\cref{cor: upper bound atoms}}
\begin{cor1}
    Let $X=\pmx{w}{x}{y}{z}\in \mtwo$ such that $w\le z<x,y$ and $\gcd{(wz, xy)}=1.$ If $$x>2(z-1)(w-1) \text{ \hspace{2pc} or \hspace{2pc} } y>2(z-1)(w-1),$$ then $X$ and its associates are atoms in $\mtwo$.
\end{cor1}

For bisymmetric matrices, we prove a stronger upper bound on factorable matrices in \cref{cor: upperboundBisym}, which we use to find another class of atoms.

\newtheorem*{cor64}{\cref{thm:boundBisym}}
\begin{cor64} Let $X=\pmx{x}{y}{y}{x} \in \Bstar$. If $\max\{x,y\}> 1+ \frac{(\min\{x,y\})^2}{4}$, then $X$ is an atom in $\mtwo$.
\end{cor64}

\cref{thm:bisym factors} has further implications for discovering additional atoms in $\mtwo$. For example, results of Pomonarenko allow us to conclude in \cref{sec: divisor closed} that matrices in the set
$$\left \{\pmx{2^n+1}{2^n-1}{2^n-1}{2^n+1}:n\in \mathbb{N}_0\right\}$$
are atoms in the semigroup of non-negative integer matrices \cite[Theorem 5]{Ponom22}.

\section{Background and definitions}\label{sect: background}

We start off with some well-known definitions that we include for convenience. A {\it semigroup} is a set $S$ which is closed under an associative operation, $*$. A semigroup is {\it cancellative} if for all $x, y, z \in S$, $x*y=x*z$ implies that $y=z$ and similarly $y*x=z*x$ implies that $y=z$. A {\it unit} is an element $u\in S$ so that $u*v=v*u=1_S$, the multiplicative identity of $S$. In a semigroup, our primary interest is in the factorization of elements within that semigroup. An  element $a\in S$ is an {\it atom} (or an irreducible element of $S$) if $a=xy$ implies that $x$ or $y$ is a unit. A semigroup $S$ is said to be {\it atomic} if each non-unit of $S$ factors into a product of atoms.

The set $\mtwo$ under matrix multiplication is a cancellative semigroup with identity. Further, Baeth et al. showed that $\mtwo$ is atomic and satisfies the finite factorization property, in other words, that there are finitely many ways to factor a given element  \cite[Theorem 5.1]{Baeth21}. However, factorization in $\mtwo$ is not unique; for example,
\begin{equation}\label{eq1factor}
\pmx{15}{10}{10}{15}=\pmx{1}{2}{3}{1}\pmx{1}{4}{7}{3}= \pmx{5}{0}{0}{5} \pmx{3}{2}{2}{3}.
\end{equation}

The only two units in $\mtwo$ are the identity, $I=\pmx1001$, and the exchange matrix, $J=\pmx0110$. Given a matrix $X=\pmx{w}{x}{y}{z}$, the \textit{associates} of $X$ are the matrices obtained by multiplying $X$ by a unit; namely, the associates of $X$ are $X$ itself, 
$$JX=\pmx{y}{z}{w}{x}, \qquad XJ=\pmx{x}{w}{z}{y},\text{ and}\qquad JXJ=\pmx{z}{y}{x}{w}.$$
We will often refer to factorization ``up to associates'' by which we mean that there are additional factorizations which include the associate matrices of the factors presented.

Frequently, we describe a matrix as having \textit{relatively prime adjacent entries}. This indicates that the matrix meets the condition that the entries in each row are relatively prime and the entries in each column are relatively prime. This condition is equivalent to requiring that $\gcd(xy, wz)=1$ for a matrix $X=\pmx{w}{x}{y}{z}$.

One way to obtain atoms in a semigroup is to consider factorization in a smaller subset. A subset $T$ is {\it divisor-closed } in a semigroup $S$ if when $X\in T$ factors into $X=YZ$ , then the factors $Y$ and $Z$ are in $T$. If $T$ is a divisor-closed semigroup and $A$ is an atom of $T$, then $A$ is an atom of $S$ \cite[Prop. 3.2]{Baeth21}. 

One such example is the semigroup 
\[T = \left\{\pmx{x}{x+1}{x+1}{x},\pmx{x+1}{x}{x}{x+1}: x\in \mathbb{N}_0\right\}\]
which was shown to be divisor-closed in $\mtwo$ \cite[Prop. 3.10]{Baeth21}. Therefore, any atoms in $T$ are also atoms in the full semigroup, $\mtwo$. However, not all elements of this subsemigroup are atoms; for example,
\[\pmx{5}{4}{4}{5}=\pmx1221 \pmx1221\]
is not an atom in $T$ or $\mtwo$.

In this paper, we  consider the subsemigroup $\B$ of bisymmetric matrices,
\[\B= \left\{\pmx{x}{y}{y}{x}: x,y \in \mathbb{N}_0, x\neq y\right\}.\]
This is a commutative subsemigroup which is closed under multiplication; however, it is not divisor-closed in $\mtwo$. See \eqref{eq1factor}
for an example of a bisymmetric matrix which factors into matrices that are not bisymmetric. However, we specify to the subset $\B^*$ of matrices with relatively prime adjacent entries and prove that this subset is divisor-closed in \cref{thm:bisym factors}. However, note that $\B^*$ is not a subsemigroup as it is not closed under multiplication; for example,
\[\pmx{3}{1}{1}{3} \pmx{1}{5}{5}{1}=\pmx{8}{16}{16}{8}\]
is not in $\B^*$, although the factors are in $\B^*$.

%*************************************************************

\section{Preliminary Results}

We  now consider various conditions that must be met for a matrix to be an atom. We borrow some terminology from Raney \cite{Raney73}. We  say that a row of a matrix is dominant if both entries are greater than or equal to the corresponding entries in the other row. Similarly, a column is dominant if both entries are greater than or equal to the corresponding entries in the other column. If neither row is dominant, then we say that a matrix is row-balanced, and if neither column is dominant, then we say that the matrix is column-balanced. A matrix is doubly-balanced if it is both row- and column-balanced.

If a matrix is row or column dominant, then it can be factored by an elementary row or column operation -- in particular, by an associate of $\pmx{1}{1}{0}{1}$. Therefore, atoms of $\mtwo$ must necessarily be doubly-balanced, unless they are associates of $\pmx{1}{1}{0}{1}$, which was shown in \cite{Baeth21}.

\begin{prop}\cite[Prop. 4.5]{Baeth21} \label{PropBaethAtoms}
Suppose that $X = \pmx{w}{x}{y}{z}$ is an atom in $\mtwo$. Then one of the following holds:
\begin{itemize}
    \item If $X$ has at least one zero entry, then $X$ is equal to one of the matrices listed in (1) or (2) of Theorem \ref{TheoremBaethAtoms}.
    \item If $X$ has no zero entries, then, multiplying by $\pmx{0}{1}{1}{0}$ to obtain the minimum entry in the upper left, $w \leq z< x,y $ and $\gcd(x, wz)=\gcd(y,wz)=1$.
\end{itemize}
\end{prop}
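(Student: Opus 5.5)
We argue by contraposition in each case: assume $X$ is an atom and derive the stated structure by exhibiting explicit factorizations whenever the structure fails.

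\emph{Zero entries.} Suppose $X$ has a zero entry. Since $\det X\neq 0$, there is at most one zero in each row and each column. Up to multiplying by $J$ on the left or right, which preserves atomicity, we may place a zero in position $y$, so $X=\pmx{w}{x}{0}{z}$ with $w,z\ge 1$.

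If $x>0$, we factor out a column or row operation. If $x\ge w$, then
\[X=\pmx{w}{x-w}{0}{z}\pmx1101.\]
If $x<w$, then either $x\ge z$, in which case
\[X=\pmx1101\pmx{w}{x-z}{0}{z},\]
or $x<z$, and we use a different elementary factor. Since atoms are doubly balanced, each case gives a proper factorization unless $X=\pmx1101$ itself, i.e.\ unless the remaining factor is a unit. Upper triangular cases with $x>0$ reduce to $w=z=x=1$, yielding list (1).

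If $x=0$, then $X=\operatorname{diag}(w,z)$. If both $w,z>1$, then $X=\operatorname{diag}(w,1)\operatorname{diag}(1,z)$. If $w$ is composite, say $w=ab$ with $a,b>1$, then $X=\operatorname{diag}(a,1)\operatorname{diag}(b,1)$. Hence $X$ is $\operatorname{diag}(p,1)$ up to associates, which gives list (2).

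The main care needed in this case is verifying that the doubly-balanced condition forces exactly these forms. A triangular matrix $\pmx{w}{x}{0}{z}$ is never row-balanced: the top row dominates the bottom row whenever $w\ge 0$ and $x\ge z$, and otherwise column dominance applies. So the only triangular atoms are the unit-adjacent ones, namely $\pmx1101$ and its associates, which also cover $\pmx1110$ and the others in list (1).

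\emph{No zero entries.} Use $J$ on either side to move the minimum entry to position $w$. Doubly balanced means neither row nor column dominates. From $w\le x$ and $w\le y$, row balance of $\pmx{w}{x}{y}{z}$ forces $z<x$, since otherwise the second row would dominate. Similarly, column balance forces $z<y$. If instead $z$ were the minimum, an appropriate associate would place it at $w$. This yields $w\le z<x,y$.

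\emph{Coprimality.} Suppose a prime $p$ divides both entries of some row, say $p\mid w$ and $p\mid x$. Then
\[X=\pmx p001\pmx{w/p}{x/p}{y}{z},\]
and both factors are non-units. The same argument applies to the second row and, factoring on the right, to the columns. Hence every row and every column is coprime, which is equivalent to $\gcd(x,wz)=\gcd(y,wz)=1$.

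I expect the zero-entry casework to be the main obstacle: the triangular reductions must be stated carefully so that every non-listed triangular matrix receives an explicit proper factorization.
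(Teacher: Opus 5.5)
\textbf{Gap in the zero-entry case.} Your zero-free case is correct. When $X$ has no zero entries, splitting off an associate of $\pmx1101$ leaves a factor that still contains a full nonzero row or column of $X$, so that factor is neither $I$ nor $J$. Hence atoms are doubly balanced, and your derivation of $w\le z<x,y$ and of the coprimality conditions via $\operatorname{diag}(p,1)$ is sound. (The paper only quotes this proposition from Baeth et al.\ and gives no proof, so there is no route to compare against.)

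The gap comes after you normalize to $\pmx{w}{x}{0}{z}$ with $x>0$. You handle $x\ge w$ and $z\le x<w$ correctly by transvections. For the remaining subcase $1\le x<\min\{w,z\}$, you only say you ``use a different elementary factor.'' You then argue the subcase cannot arise because a triangular matrix is never doubly balanced (``otherwise column dominance applies''). That claim is false: column dominance would require $x\ge w$. For instance, $\pmx2102$ has no dominant row and no dominant column. So no associate of $\pmx1101$ splits off it, and `atoms are doubly balanced' rules nothing out here.

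The missing step is to split off a diagonal matrix instead:
\[
\pmx{w}{x}{0}{z}=\pmx{1}{0}{0}{z}\pmx{w}{x}{0}{1}.
\]
This is a proper factorization in your subcase, since $z>x\ge 1$; for example, $\pmx2102=\pmx1002\pmx2101$.

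In fact, diagonal splitting settles the whole triangular case directly:
\begin{itemize}
\item If $z\ge 2$, the displayed factorization is proper unless $X=\operatorname{diag}(1,z)$.
\item If $z=1<w$, use $\pmx{w}{x}{0}{1}=\pmx{1}{x}{0}{1}\pmx{w}{0}{0}{1}$, which is proper unless $x=0$.
\item If $w=z=1$, then $\pmx{1}{x}{0}{1}={\pmx1101}^{x}$.
\end{itemize}
Combined with your prime-splitting of diagonal matrices, this leaves exactly $\pmx1101$, $\operatorname{diag}(p,1)$ and $\operatorname{diag}(1,p)$. Up to associates, these are lists (1) and (2).
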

The second condition is equivalent to requiring that the matrix is doubly-balanced with $\gcd(xy, wz)=1$.

We also obtain that for a doubly-balanced matrix that factors, the first factor must be row-balanced, and the second factor must be column-balanced.

\begin{prop}\label{lem:balanced} Suppose that a doubly-balanced matrix $X\in \mtwo$ factors into $X=AB$, where $A, B \in \mtwo$. The first factor $A$ is row-balanced, and the second factor $B$ is column-balanced.
\end{prop}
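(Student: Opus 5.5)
We argue by contraposition, treating $A$ and $B$ separately. The key observation is that with non-negative entries, dominance is preserved under multiplication on the appropriate side. Write
\[A=\pmx{a}{b}{c}{d},\qquad B=\pmx{e}{f}{g}{h},\]
all entries in $\mathbb{N}_0$. Then the rows of $X=AB$ are
\[(a,b)B \qquad\text{and}\qquad (c,d)B.\]
That is, each row of $X$ is the corresponding row of $A$ multiplied on the right by $B$.

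\textbf{The factor $A$.} Suppose $A$ is not row-balanced, so some row of $A$ is dominant; after relabeling, say $a\ge c$ and $b\ge d$. Then
\[(a,b)B-(c,d)B=(a-c,\;b-d)\,B.\]
This is a row vector with non-negative entries times a matrix with non-negative entries, so it is entrywise non-negative. Hence the first row of $X$ dominates the second, and $X$ is not row-balanced. This contradicts the assumption that $X$ is doubly-balanced, so $A$ must be row-balanced.

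\textbf{The factor $B$.} The columns of $X$ are
\[A\begin{pmatrix}e\\ g\end{pmatrix}\qquad\text{and}\qquad A\begin{pmatrix}f\\ h\end{pmatrix}.\]
If some column of $B$ is dominant, say $e\ge f$ and $g\ge h$, then
\[A\begin{pmatrix}e-f\\ g-h\end{pmatrix}\ge 0\]
entrywise. So the first column of $X$ dominates the second, again a contradiction. Alternatively, we can get this case from the first by transposing: $X^T=B^TA^T$, and $X^T$ is also doubly-balanced.

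\textbf{Where care is needed.} There is no real obstacle here. The only point to handle carefully is that dominance means weak inequality in both coordinates. This is exactly what non-negativity of entries preserves: a non-negative vector times a non-negative matrix is non-negative. The same holds in the degenerate case where the two rows of $A$ are equal, since the difference is then zero and still non-negative.
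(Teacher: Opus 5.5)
Your proof is correct and is essentially the paper's argument. The paper writes $A=\pmx{1}{1}{0}{1}\pmx{a-c}{b-d}{c}{d}$ and observes that $X=R(A'B)$ then has a dominant row, which is exactly your computation that the difference of the rows of $X$ is $(a-c,\,b-d)B\ge 0$ (with the column case for $B$ handled symmetrically); your direct version just makes explicit the step the paper leaves implicit.
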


\begin{proof}
   Suppose for contradiction's sake that $A=\pmx{a}{b}{c}{d}$ is not row-balanced. Without loss of generality, suppose $a\geq c$ and $b\geq d$. Then, $A$ will factor into
    \[A= \pmx{1}{1}{0}{1}\pmx{a-c}{b-d}{c}{d}\]
    where both matrices are non-negative. Letting $R=\pmx{1}{1}{0}{1}$ and $A'=\pmx{a-c}{b-d}{c}{d}$, we see that  $A=RA'$ implies  $X=AB = R(A' B)$ is not row-balanced.

    Similarly, if $B$ is not column-balanced, then $B$ will factor into $B=B' R_2$ where $R_2=\pmx1011$ or $\pmx1101$. But then $X=AB=(AB')R_2$ is not column-balanced.
\end{proof}

If we additionally require that adjacent entries are relatively prime, then we also obtain that the factors must not have any zero entries.

\begin{lemma}\label{LemmaNoZeroes} Let $X=\pmx{w}{x}{y}{z}\in \mtwo$ be doubly-balanced and $\gcd{(wz, xy)}=1.$ If $X=AB$ where $A, B \in \mtwo$ are not units, then $A$ and $B$ have no zero entries.
\end{lemma}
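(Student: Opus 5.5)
The plan is to show directly that a zero entry in either factor forces that factor to be a unit, using only two facts. The first is \cref{lem:balanced}: $A$ is row-balanced and $B$ is column-balanced. The second is that $\gcd(wz,xy)=1$ means each row and each column of $X$ has relatively prime entries.

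First I record that a row of $A$ with a zero entry is very restricted. Write $A=\pmx{a}{b}{c}{d}$ and suppose some entry of $A$ is zero. Without loss of generality the zero lies in the first row; the second-row case is identical after relabelling rows. Then the first row of $A$ is either $(0,b)$ or $(a,0)$.

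In the case $(0,b)$, we have $b\neq 0$ because $\det A\neq 0$. The first row of $X=AB$ is $b$ times the second row of $B$. Since $\gcd(w,x)=1$, this forces $b=1$. Now write the other row of $A$ as $(c,d)$ and use that $A$ is row-balanced:
\begin{itemize}
\item the second row must not dominate $(0,1)$, which gives $d<1$, so $d=0$;
\item the first row must not dominate $(c,d)$, which gives $c>0$.
\end{itemize}
The second row of $X$ is then $c$ times the first row of $B$, and $\gcd(y,z)=1$ gives $c=1$. Hence $A=J$, a unit, which is a contradiction. The case $(a,0)$ is symmetric: we get $a=1$, then row-balancedness forces $c=0$, and then $d=1$. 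So $A=I$, again a contradiction.

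The argument for $B$ is the transpose of this one. Here a column of $B$ with a zero entry makes a column of $X$ a scalar multiple of a column of $A$. The coprimality of the column entries of $X$ (that is, $\gcd(w,y)=\gcd(x,z)=1$) forces that scalar to be $1$. Column-balancedness of $B$, again from \cref{lem:balanced}, then forces $B\in\{I,J\}$.

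I expect no real obstacle; the only care needed is in the bookkeeping of the cases. One point to check along the way is that the coprimality facts used are exactly what $\gcd(wz,xy)=1$ provides. Degenerate cases where $X$ itself has a zero entry cause no trouble, since the argument never needs $X$ to have positive entries.
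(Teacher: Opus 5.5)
Your proof is correct and follows the paper's argument. A zero in a row of $A$ makes a row of $X$ a scalar multiple of a row of $B$; coprimality forces that scalar to be $1$, balancedness forces the complementary zero, and coprimality again forces $A\in\{I,J\}$ (and likewise for $B$). The differences are cosmetic: you obtain $d=0$ from row-balancedness of $A$ via \cref{lem:balanced}, whereas the paper reads it directly off the dominance in $X=\pmx{g}{h}{ce+dg}{cf+dh}$, and you write out the remaining cases (including $B$ by transposition) that the paper dismisses as ``similar.''
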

\begin{proof}
    Suppose that $X$ factors into $X=\pmx{a}{b}{c}{d}\pmx{e}{f}{g}{h}$ and $a=0$. Then,
    \[X=\pmx{0}{b}{c}{d}\pmx{e}{f}{g}{h}=\pmx{bg}{bh}{ce+dg}{cf+dh}.\]
    If $b>1$, then this matrix would not satisfy the condition that $\gcd(wz,xy)=1$. If $b=0$, the determinant of the first factor would be zero. Therefore, we must have $b=1$. Then $X$ simplifies to 
    \[X=\pmx{0}{1}{c}{d}\pmx{e}{f}{g}{h}=\pmx{g}{h}{ce+dg}{cf+dh}.\]
    Note that $X$ is not row-balanced unless $d=0$; otherwise $g\leq ce+dg$ and $h\leq cf+dh$. Therefore, $d=0$ and $X$ simplifies to
    \[X=\pmx{0}{1}{c}{0}\pmx{e}{f}{g}{h}=\pmx{g}{h}{ce}{cf}.\]
    However, this contradicts the condition that $\gcd(wz, xy)=1$ unless $c=1$, in which case the first factor is the exchange matrix.

    If  any entry in either factor is zero, we can show by similar argument that the factor is either the exchange or identity matrix.
\end{proof}

If we factor a doubly-balanced matrix with relatively prime adjacent entries into atoms, we additionally obtain that the first and last atoms in the factorization have non-zero entries, as opposed to the atoms listed in (1) and (2) from \cref{TheoremBaethAtoms}. This also results in the first and last atoms in the factorization being of similar character: doubly-balanced with relatively prime adjacent entries.

\begin{cor}\label{lem:bookends}
Let $X\in \mtwo$ be factorable, doubly-balanced, and have relatively prime adjacent entries. For all factorizations $X=X_0\cdots X_n$ into $X_i$ atoms, it follows that $X_0$ and $X_n$ have no zero entries, are doubly-balanced, and have relatively prime adjacent entries.
\end{cor}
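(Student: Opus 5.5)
The plan is to group the factorization and reduce everything to Lemma~\ref{LemmaNoZeroes} and Proposition~\ref{lem:balanced}. Since $X$ is factorable, every factorization into atoms has $n\ge 1$. Set $A = X_0$ and $B = X_1\cdots X_n$. Neither is a unit, because atoms are non-units and the units of $\mtwo$ are only $I$ and $J$. A product of non-units is again a non-unit here, since $\absdet{B}\ge 1$, with equality only when every factor has determinant $\pm1$; and the only determinant $\pm1$ non-units with non-negative entries are not invertible in $\mtwo$.

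By Lemma~\ref{LemmaNoZeroes}, $X_0$ has no zero entries. Symmetrically, grouping $X=(X_0\cdots X_{n-1})X_n$ shows that $X_n$ has no zero entries.

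Next I would handle $X_0$; the case of $X_n$ is symmetric under transposition. The key point is that an atom with no zero entries is of the type in Proposition~\ref{PropBaethAtoms}, second bullet. That proposition says an atom with no zero entries, after multiplying by $J$ to move the minimum to the upper left, satisfies $w\le z<x,y$ and $\gcd(x,wz)=\gcd(y,wz)=1$. The paper remarks that this is equivalent to being doubly-balanced with $\gcd(xy,wz)=1$.

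There is one exception to handle. The atoms in Theorem~\ref{TheoremBaethAtoms}(1) with no zero entries are associates of $\pmx1101$, but those contain a zero entry. So every atom without zero entries falls under the second bullet. Doubly-balancedness and the gcd condition are preserved under multiplication by $J$ on either side, since these operations just permute rows or columns. Hence $X_0$ and $X_n$ themselves are doubly-balanced with relatively prime adjacent entries.

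As a sanity check, the gcd condition can also be seen directly. If a prime $p$ divides both entries of a row of $X_0$, it divides the corresponding row of $X$. Similarly, a common prime factor of a column of $X_n$ would pass to a column of $X$. The remaining directions, rows of $X_n$ and columns of $X_0$, do not follow this way, which is why I rely on Proposition~\ref{PropBaethAtoms}.

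The main obstacle is the step that applies Proposition~\ref{PropBaethAtoms}: I must confirm that "no zero entries" rules out bullet one entirely. Once that is checked, the rest is bookkeeping with Lemma~\ref{LemmaNoZeroes}.
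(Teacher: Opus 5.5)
Your argument is correct and is essentially the paper's proof: apply Lemma~\ref{LemmaNoZeroes} to the groupings $X_0(X_1\cdots X_n)$ and $(X_0\cdots X_{n-1})X_n$, then use the second bullet of Proposition~\ref{PropBaethAtoms}. The obstacle you flag is immediate, since the first bullet only concerns atoms with a zero entry.

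One small repair: your determinant remark does not actually show that $X_1\cdots X_n$ is a non-unit, because a product of determinant $\pm1$ non-units could a priori be a unit. Argue instead that if $X_1\cdots X_n=U\in\{I,J\}$, then $X_1$ has the right inverse $X_2\cdots X_nU^{-1}$ in $\mtwo$ and hence is a unit, contradicting that it is an atom. The paper's proof passes over this point.
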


\begin{proof}
Let $X \in \mtwo$ be a factorable, doubly-balanced matrix with relatively prime adjacent entries. Suppose $X$ factors into $X=X_0\cdots X_n$ where each $X_i$ is an atom. We have factorizations into $X = X_0 (X_1 \cdots X_n)$ or $X = (X_0 \cdots X_{n-1}) X_n$, and by \cref{LemmaNoZeroes}, then $X_0$ and $X_n$ must have no zero entries. (Note that $X_0$ and $X_n$ are atoms and so are not units.) Therefore by Proposition 3.1, $X_0$ and $X_n$ must be doubly-balanced with relatively prime adjacent entries.
\end{proof}

It immediately follows that a factorable, doubly-balanced matrix with relatively prime adjacent entries must factor into at least two matrices of similar character. If it does not, then we conclude it must be an atom.

\section{Atoms in $\mtwo$ Based on Determinants}\label{Sec. Determinant}

We now turn our attention to the determinants of matrices. 
Throughout this section, we focus on matrices in $\mtwo$ which are doubly-balanced with relatively-prime adjacent entries, since these matrices are the ``candidates'' to be atoms from \cref{PropBaethAtoms}. First, we will obtain a lower bound on the determinant from the minimum entry of the matrix.

\begin{lemma}\label{lem:DetMin}
    Suppose $X\in \mtwo$ is a doubly-balanced matrix with relatively prime adjacent entries, and let $m$ be the minimum entry of $X$. Then, $\absdet{X}\ge 2m+1$.
\end{lemma}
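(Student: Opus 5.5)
By \cref{PropBaethAtoms} and the discussion after it, we may multiply by the units $I$ and $J$ on either side without changing $\absdet{X}$, the minimum entry, double-balance, or the gcd condition. So we may normalize $X=\pmx{w}{x}{y}{z}$ with $w\le z<x,y$. Here $m=w$ is the minimum entry and $\gcd(wz,xy)=1$.

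The reason the normalized form exists is as follows. Doubly-balanced matrices with relatively prime adjacent entries have no zero entries, except for degenerate small cases: a zero entry forces its row and column partners to be $1$. The double-balance condition then forces the minimum entry and its diagonal partner to be strictly smaller than the two off-diagonal entries. This is exactly the second bullet of \cref{PropBaethAtoms}, whose content we can reuse directly. In the degenerate case $w=0$ the bound reads $\absdet{X}\ge 1$, which is trivial since $\det X\neq 0$.

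With the normalization, $\absdet{X}=xy-wz$, since $xy>z\cdot z\ge wz$. The plan is to bound $xy$ from below. The entries $x,y$ are integers with $x,y\ge z+1$, so
\[
xy-wz\ \ge\ (z+1)^2-wz\ =\ z^2+2z+1-wz\ =\ z(z-w)+2z+1.
\]
Using $z\ge w$, we get $z(z-w)\ge 0$ and $2z+1\ge 2w+1$. Hence $\absdet{X}\ge 2w+1=2m+1$.

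The main (mild) obstacle is justifying the normalization $w\le z<x,y$ from the hypotheses alone, rather than from $X$ being an atom, since \cref{PropBaethAtoms} is stated for atoms. I would check this directly. By the unit action we may put the minimum entry in position $w$. Column balance then forbids $z\le y$ together with $w\le x$ holding in the "wrong" direction. Concretely, since $w\le x$ and $w\le y$, row balance forces $z<x$ or $z<y$ in an appropriate pairing, and column balance supplies the other inequality; the strictness comes from $\gcd(wz,xy)=1$ ruling out equalities such as $z=x$ unless $z=x=1$, which contradicts balance. If this case analysis gets messy, a fallback is to use only $x,y>w$ together with $xy-wz>0$. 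Then we need the single inequality $xy\ge wz+2w+1$, which follows by considering whether $z<x$ or $z<y$ and multiplying the strict inequalities.
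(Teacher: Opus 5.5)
Your proof is correct and is essentially the paper's argument: normalize via \cref{PropBaethAtoms} to $w\le z<x,y$, then use $x,y\ge z+1$ and $z\ge w$ to get $xy\ge (z+1)^2\ge (w+1)(z+1)$, so $|\det X| = xy-wz\ge w+z+1\ge 2w+1$. You are right that \cref{PropBaethAtoms} is stated for atoms, and the fix is cleaner than your sketch. Once $w$ is the minimum entry, $y\ge w$ plus row balance forces $z<x$, and $x\ge w$ plus column balance forces $z<y$; the strictness comes from non-dominance, not from the gcd condition. The only zero-entry cases are the units $I,J$, where $m=0$ and the bound is trivial, a case the paper's proof passes over.
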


\begin{proof}
    Let $X$ be doubly-balanced with relatively prime adjacent entries. Multiply $X$ by the exchange matrix $J$ until the matrix is in the form $\pmx{w}{x}{y}{z}$ where $w$ is its minimum entry and by \cref{PropBaethAtoms}, $1\le w\le z<x,y$. Without loss of generality, we will call this reduced form $X$. Note that the absolute value of the determinant of $X$ remains unchanged since $\absdet{J}=1$. It follows that $x,y\ge z+1$ and $xy \ge (z+1)(z+1).$ Since $z\ge w$, we find that
    \begin{align*}
        xy &\ge (w+1)(z+1) \\
        &= wz+w+z+1 \\
        &\ge wz+2w+1.
    \end{align*}
Subtracting $wz$ from the final inequality results in $xy - wz \ge 2w+1$ resulting in the absolute value of the determinant of $X$, $$\absdet{X}=|wz-xy| = xy-wz\ge 2w+1,$$ where $w$ is the minimum entry of $X$.
\end{proof}

Since the entries of matrices in $\mtwo$ are non-negative, we can use the lower bound in Lemma \ref{lem:DetMin}, along with divisibility properties, to get a restriction on the small determinants $1$, $2$, and $4$.

\begin{lemma} \label{lem:Det124}
    Suppose $X\in \mtwo$ is a doubly-balanced matrix with relatively prime adjacent entries. Then $\absdet{X}\not \in \{1,2,4\}.$ 
\end{lemma}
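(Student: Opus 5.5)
We normalize $X$ as in the proof of \cref{lem:DetMin}: multiplying by the exchange matrix $J$ as needed, we may write $X=\pmx{w}{x}{y}{z}$ with $1\le w\le z<x,y$ and $\gcd(wz,xy)=1$. This is allowed by \cref{PropBaethAtoms} and the remark after it, since a doubly-balanced matrix with relatively prime adjacent entries has no zero entries. Multiplication by $J$ leaves $\absdet{X}=xy-wz$ unchanged. By \cref{lem:DetMin}, $\absdet{X}\ge 2w+1\ge 3$. This immediately rules out determinants $1$ and $2$, and it leaves only the case $\absdet{X}=4$.

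For $\absdet{X}=4$, the bound $2w+1\le 4$ forces $w=1$. The inequalities in the proof of \cref{lem:DetMin} then give
\[
xy-z\ \ge\ (z+1)^2-z\ =\ z^2+z+1 .
\]
Since $z\ge 1$, this is at least $3$. If $z\ge 2$, it is at least $7>4$, a contradiction. So $z=1$, and the equation becomes $xy=5$ with $x,y\ge 2$. This is impossible because $5$ is prime. Hence $\absdet{X}\neq 4$.

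The argument could also use parity and divisibility. If $xy-wz=4$ with $\gcd(wz,xy)=1$, then $wz$ and $xy$ have the same parity and are coprime, so both are odd. Their difference $4$ is consistent with that, so parity alone does not settle the case, and the size bound above is what actually excludes it. The main step is this refinement of \cref{lem:DetMin}: once $w=1$ is fixed, the lower bound must be sharpened using $x,y\ge z+1$ in both factors. The remaining case is a short finite check.
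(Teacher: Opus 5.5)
Your proof is correct and is essentially the paper's argument: \cref{lem:DetMin} disposes of $1$ and $2$, and for determinant $4$ a balancedness lower bound on $xy$ forces the small entries down to $1$, leaving the impossible $xy=5$. Working in the normalized form and using $xy\ge(z+1)^2$ with $w=1$ just merges the paper's two cases into one and makes its parity step unnecessary.

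One caveat is shared with the paper's own proof (``No entry can be zero''). Your assertion that a doubly-balanced matrix with relatively prime adjacent entries has no zero entries is false for the units $I$ and $J$. Both satisfy the hypotheses and have $\absdet{X}=1$, and a zero entry in fact forces $X\in\{I,J\}$. So the lemma tacitly requires $X$ to be a non-unit. This is harmless where the lemma is applied, namely to the atoms $X_0,X_n$ in \cref{thm:absdetp} and \cref{thm:absdet32}.
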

\begin{proof}
     Let $X\in \mtwo$ be a doubly-balanced matrix with relatively prime adjacent entries. By \cref{lem:DetMin}, $\absdet{X} \ge 2m+1$ where $m$ is the minimum entry of $X$. No entry can be zero, so $m \ge  1$ implies $\absdet{X}\ge 3,$ and $\absdet{X} \not \in \{1,2\}.$

     Now suppose for sake of contradiction that $\absdet{X}=4$. \cref{lem:DetMin} bounds $2m+1\le 4,$ which implies $m=1$. Let $X$ have arbitrary entries $X=\pmx{w}{x}{y}{z}$,  and notice how $\absdet{X}=|wz-xy|=4$. Since their difference is even, $wz$ and $xy$ will have the same parity, and we find that they must both be odd; otherwise, an adjacent pair of entries in $X$ would share a factor of $2$ and not be relatively prime. Since $X$ is doubly-balanced, there are two cases to consider. In case 1, we let $w>x$ and $z>y$, which implies $w\geq x+1$ and $z\geq y+1$ since each entry is an integer.  Then, $wz \ge (x+1)(y+1)$ implies that
     \begin{align*}
     wz=xy+4 &\ge (x+1)(y+1)\\
     &= xy+x+y+1.
     \end{align*}
      This simplifies to $3\ge x+y$, which implies that either $xy=2$, which is even, and thus a contradiction, or $xy=1$. If $xy=1$, then $|wz-xy|=4$ implies that $wz=5$ (since $w$ and $z$ are non-negative), so the remaining entries are $1$ and $5$, contradicting that the matrix is doubly-balanced. 
     
     In case 2, we flip the inequalities so $w<x$ and $z<y$, and hence $xy \ge (w+1)(z+1)$. We arrive at a similar inequality,
     \begin{align*}
     xy=wz+4 &\ge (w+1)(z+1)\\
     &= wz+w+z+1,
     \end{align*}
     which simplifies to $3\ge w+z$. Employing the same techniques from case 1, we find that $wz=1$ or $2$, which both give contradictions. 
     
     Both cases being exhausted, it follows that $\absdet{X}\not \neq {4}$.
\end{proof}

Once we eliminate $1$, $2$, and $4$ from possible determinants of doubly-balanced matrices with relatively prime adjacent entries, we can also obtain restrictions on the determinants on factorable matrices, since determinants are multiplicative. In particular, the determinant of a factorable matrix of this type cannot be $p$, $2p$, or $4p$, where $p$ is prime.

\begin{theorem}\label{thm:absdetp}
    Suppose $X\in \mtwo$ is a doubly-balanced matrix with relatively prime adjacent entries. The matrix $X$ is an atom if the absolute value of the determinant is $p$, $2p$, or $4p$, where $p$ is prime.
\end{theorem}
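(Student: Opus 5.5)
We argue by contradiction, using that determinants are multiplicative together with \cref{lem:bookends} and \cref{lem:Det124}. Suppose $X$ is doubly-balanced with relatively prime adjacent entries, $\absdet{X}\in\{p,2p,4p\}$ for a prime $p$, and $X$ is not an atom. Since $\mtwo$ is atomic, $X$ has a factorization $X=X_0X_1\cdots X_n$ into atoms with $n\ge 1$; $n=0$ is excluded because $X$ is not itself an atom.

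By \cref{lem:bookends}, the first and last atoms $X_0$ and $X_n$ have no zero entries, are doubly-balanced, and have relatively prime adjacent entries. Applying \cref{lem:Det124} to each of them gives
\[\absdet{X_0},\ \absdet{X_n}\notin\{1,2,4\}.\]
Since $n\ge1$, the matrices $X_0$ and $X_n$ are distinct factors in the product, so by multiplicativity of the determinant,
\[\absdet{X}=\absdet{X_0}\cdot\absdet{X_1\cdots X_{n-1}}\cdot\absdet{X_n},\]
where the middle factor is interpreted as $1$ when $n=1$. In particular $\absdet{X_0}\absdet{X_n}$ divides $\absdet{X}$. Every factor here is a positive integer, since all matrices lie in $\mtwo$ and so have nonzero determinant.

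It remains to rule out the divisibility. Each of $\absdet{X_0}$ and $\absdet{X_n}$ is a positive divisor of $\absdet{X}$ lying outside $\{1,2,4\}$.
\begin{itemize}
\item If $\absdet{X}=p$, its divisors are $1$ and $p$, so $\absdet{X_0}=\absdet{X_n}=p$. Then $p^2\mid p$, which is a contradiction.
\item If $\absdet{X}=2p$ or $4p$, with $p$ odd, every positive divisor outside $\{1,2,4\}$ is divisible by $p$. Hence $p^2$ divides $2p$ or $4p$, which forces $p\mid 4$, impossible for odd $p$.
\item If $p=2$, then $\absdet{X}\in\{2,4,8\}$. The values $2$ and $4$ are already excluded for $X$ itself by \cref{lem:Det124}. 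For $\absdet{X}=8$, the only divisor outside $\{1,2,4\}$ is $8$, so $64\mid 8$, which is a contradiction.
\end{itemize}
In every case we reach a contradiction, so $X$ is an atom.

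Most of this is routine arithmetic. The one step that needs care is the claim that $X_0$ and $X_n$ are genuinely two separate factors, each with determinant outside $\{1,2,4\}$. This is exactly where \cref{lem:bookends} is needed rather than the bare fact that $X$ factors: an arbitrary factorization could have one factor of determinant $1$ or $2$, and then no contradiction would follow. The small case $p=2$ also has to be handled separately, as above.
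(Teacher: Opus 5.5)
Your proof is correct and follows the paper's argument: \cref{lem:bookends} gives atoms $X_0, X_n$ at the two ends of the factorization with $\absdet{X_0},\absdet{X_n}\notin\{1,2,4\}$ (via \cref{lem:Det124}), and a case-by-case divisibility argument on $\absdet{X}\in\{p,2p,4p\}$ then yields a contradiction. Phrasing the bound as $\absdet{X_0}\absdet{X_n}\mid\absdet{X}$ and treating $p=2$ separately makes your version slightly tighter than the paper's $2p$ case, which asserts that $d_0,d_n$ both divide $p$ even though either could equal $2p$.
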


\begin{proof}
    Let $X\in \mtwo$ be a doubly-balanced matrix with relatively prime adjacent entries where $\absdet{X}=mp$ for $p$ prime and $m\in\{1,2,4\}$. Assume, by way of contradiction, that $X$ is not an atom. By \cref{lem:bookends}, we know that $X$ can be expressed as a product of atoms $X_{0}\cdots X_{n}$ where $X_0$ and $X_n$ are also doubly-balanced with relatively prime adjacent entries.  Applying Lemmas 4.1 and 4.2, this means that $d_0=\absdet {X_0}$ and $d_n=\absdet{X_n}$ must be positive with $d_0,d_n\not \in \{1,2,4\}.$ We show that for all three values of $m$, the atoms $X_0$ and $X_n$ cannot exist. 
    
    If $m=1$, then $\absdet{X}=p$, $$d_0d_n \le p \qquad \text{ and} \qquad d_0,d_n\mid p.$$
    This implies $d_0\mid p$ and $d_n\mid p$. Neither can be $1$, so $d_0=d_n=p$ and thus, we arrive at a contradiction where $d_0 d_n=p^2.$
    
     If instead $m=2$, then $$d_0d_n \le 2p \qquad \text{ and} \qquad d_0,d_n\mid 2p.$$
    Since neither $d_0$ nor $d_n$ equals $1$ or $2$, this again implies they both divide $p$, which leads to contradiction.

     Finally, if $m=4$, then $$d_0d_n \le 4p \qquad \text{ and} \qquad d_0,d_n\mid 4p.$$
    Since $d_0$ cannot equal $1$, $2$, or $4$, it cannot divide $4$. 
    Therefore, $d_0, d_n\mid 4p$ implies that $d_0$ is equal to $p$, $2p$, or $4p$. But that implies that $d_n$ is equal to $4$, $2$, or $1$, which is a contradiction.
\end{proof}

We also apply Lemma 4.2 to find more determinants that yield additional atoms.

\begin{theorem} \label{thm:absdet32}
        Suppose $X\in \mtwo$ is a doubly-balanced matrix with relatively prime adjacent entries. The matrix $X$ is an atom if the absolute value of its determinant is $16$ or $32$.
\end{theorem}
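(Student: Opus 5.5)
The argument follows the proof of \cref{thm:absdetp}: combine \cref{lem:bookends} with the determinant restriction of \cref{lem:Det124} and a divisibility count. Let $X$ be doubly-balanced with relatively prime adjacent entries and $\absdet{X}\in\{16,32\}$, and suppose for contradiction that $X$ is not an atom.

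Since $\mtwo$ is atomic, $X$ factors into atoms as $X=X_0X_1\cdots X_n$, and $n\ge 1$ because $X$ is not an atom. By \cref{lem:bookends}, the outer factors $X_0$ and $X_n$ have no zero entries, are doubly-balanced, and have relatively prime adjacent entries. \cref{lem:Det124} then gives
\[d_0=\absdet{X_0}\notin\{1,2,4\}, \qquad d_n=\absdet{X_n}\notin\{1,2,4\}.\]

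The determinant is multiplicative, and every factor has a nonzero integer determinant. Since $n\ge1$, $X_0$ and $X_n$ are distinct factors in the product, so
\[d_0\,d_n \cdot \prod_{0<i<n}\absdet{X_i}=\absdet{X}.\]
Hence $d_0d_n$ divides $\absdet{X}$, which is $2^4$ or $2^5$.

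It follows that $d_0$ and $d_n$ are powers of $2$ avoiding $1,2,4$, so each is at least $8$. Then $d_0d_n\ge 64>32\ge\absdet{X}$, contradicting $d_0d_n\mid\absdet{X}$. Therefore $X$ is an atom.

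There is no real obstacle beyond this. The only points needing care are that $n\ge1$, so the two outer determinants appear separately in the product, and that the middle factors contribute positive integer factors, so divisibility (not just an inequality) holds. The same argument covers any $\absdet{X}=2^k$ with $k\le5$, and $k=5$ is the largest exponent it handles, since $64=8\cdot 8$ can be reached.
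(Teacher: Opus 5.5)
Your proof is correct and is essentially the paper's argument: \cref{lem:bookends} makes the outer atoms $X_0$ and $X_n$ doubly-balanced with relatively prime adjacent entries, \cref{lem:Det124} then forces their determinants to be powers of $2$ that are at least $8$, and $d_0d_n\ge 64$ contradicts $\absdet{X}\le 32$. Your explicit observation that $n\ge 1$ makes $X_0$ and $X_n$ distinct factors is a small point the paper leaves implicit.
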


\begin{proof}
    Let $X\in \mtwo$ be a doubly-balanced matrix with relatively prime adjacent entries where $\absdet{X}=32$. If $X$ is not an atom, then by \cref{lem:bookends}, $X$ factors into $X = X_0\cdots X_n$, where $X_i$ are atoms and $X_0$ and $X_n$ are doubly-balanced with relatively prime adjacent entries. By the multiplicative property of determinants,
    \[\absdet X = \prod_{i=0}^n \absdet{X_i}=32=2^5\]
    Therefore, we must have that $\absdet{X_0}=2^a$ and $\absdet{X_n}=2^b$ for non-negative integers $a$ and $b$. However, by \cref{lem:Det124}, $\absdet{X_0}$ and $\absdet{X_n}$ cannot equal $1$, $2$, or $4$, so they must be at least $8$. But then 
    $\absdet{X} \geq \absdet{X_0} \absdet{X_n} \geq 64$ contradicts that $\absdet{X}=32$. Therefore $X$ is an atom.

    The proof that $|\det (X)|=16$ implies that $X$ is an atom is identical.

\end{proof}

%%%%%%%%%%%%%%%%%%%%%%%%%%%%%%%%%%%%%%%%%%%%%%%%%%%%%%

\section{Divisor Closed Subset of Bisymmetric Matrices}\label{sec: divisor closed}

Recall the subset of bisymmetric matrices with relatively prime adjacent entries,
\[\B^*=\left\{\pmx{x}{y}{y}{x}\in \mtwo:\gcd(x,y)=1\right\}.\]
We will show that this subset is divisor-closed in $\mtwo$.

\begin{theorem}\label{thm:bisym factors} Let $X=\pmx{x}{y}{y}{x}\in \Bstar$. If $X$ factors into $X=AB$, where $A,B \in \mtwo$, then $A,B \in \Bstar$. 
\end{theorem}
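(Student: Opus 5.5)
The plan is to reduce everything to two linear identities coming from the eigenvectors $(1,1)^T$ and $(1,-1)^T$ of $X$, and then to show that the row sums of $B$ (and the column sums of $A$) coincide. First dispose of trivial cases. If $X$ has a zero entry then, since $\gcd(x,y)=1$, $X$ is a unit, and the only factorizations are by units. So assume $x,y\ge1$. Because $x\neq y$, $X$ is doubly-balanced with $\gcd(xy,x^2)=1$. If $A$ or $B$ is a unit the claim is immediate, since associates of $X$ by $J$ are again in $\Bstar$. Otherwise \cref{LemmaNoZeroes} shows that $A=\pmx abcd$ and $B=\pmx efgh$ have no zero entries, and \cref{lem:balanced} shows that $A$ is row-balanced and $B$ is column-balanced.

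Comparing entries of $X=AB$ gives $ae+bg=x=cf+dh$ and $af+bh=y=ce+dg$. Adding and subtracting these equalities yields
\[(a-c)(e+f)=(d-b)(g+h),\qquad (a+c)(e-f)=(b+d)(h-g).\]
Put $p=e+f$, $q=g+h$ (the row sums of $B$) and $r=a+c$, $t=b+d$ (the column sums of $A$). Equivalently, $A(p,q)^T=(x+y)(1,1)^T$ and $(r,t)B=(x+y)(1,1)$, which says that the eigenvector relations for $X$ pass through the factors.

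The key claim is that $p=q$. Granting it, the first identity gives $a-c=d-b$. The relation $A(p,p)^T=(x+y)(1,1)^T$ gives $a+b=c+d$. Subtracting these two equations yields $b=c$, and then $a=d$, so $A$ is bisymmetric. Applying the symmetric argument to columns gives $r=t$, and hence $e=h$ and $f=g$. Finally, $\gcd(a,b)$ divides every entry of $A$ and hence divides $x$ and $y$, so it equals $1$; likewise $\gcd(e,f)=1$. Also $a\ne b$ and $e\neq f$, since otherwise $\det A$ or $\det B$ would vanish. Thus $A,B\in\Bstar$.

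The main obstacle is proving $p=q$ (and $r=t$). My first attempt would combine the sign information with coprimality. Row-balance of $A$ forces $a-c$ and $d-b$ to have the same sign, and column-balance of $B$ does the same for the second identity. Then I would argue by divisibility. Let $g_1=\gcd(p,q)$. Since $x+y=ap+bq$, we get $g_1\mid x+y$, and from the second identity one can obtain divisibility of $x-y$ by related quantities. Using $\gcd(x+y,x-y)\in\{1,2\}$ together with $\gcd(x,y)=1$, I would try to show that $p/g_1$ and $q/g_1$ must both divide $\gcd(a-c,d-b)$-type quantities in a way that forces $p/g_1=q/g_1$. If this direct gcd argument stalls, my fallback is an inductive, Euclid-style descent on the entries. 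If, say, $p>q$, then the column-balance of $B$ and the first identity let one peel off an elementary matrix $\pmx1101$ (or its associate) from $B$ and compensate in $A$. This produces a factorization of a smaller bisymmetric matrix in $\Bstar$, or of $X$ with smaller factors, and iterating should yield a contradiction with the minimality of $x+y$.
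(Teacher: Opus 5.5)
\textbf{There is a genuine gap: the key claim is never proved.} Your preliminary reductions, the two identities $(a-c)p=(d-b)q$ and $(a+c)(e-f)=(b+d)(h-g)$, and the closing gcd step are fine. The problem is that the pair of claims $p=q$, $r=t$ is equivalent to the theorem itself, since bisymmetric factors trivially have equal row and column sums, and you do not prove it.

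Sign and balance information cannot supply it. In the paper's example $\pmx{15}{10}{10}{15}=\pmx{1}{2}{3}{1}\pmx{1}{4}{7}{3}$, the factors have no zero entries, $A$ is row-balanced, $B$ is column-balanced, and both identities hold. Yet $p=5\neq10=q$ and $r=4\neq3=t$. So $\gcd(x,y)=1$ must be used decisively, and your sketch (``$\gcd(a-c,d-b)$-type quantities'') never says how.

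The descent fallback has no mechanism either. By \cref{lem:balanced}, no elementary matrix can be split off on the right of $B$ with a nonnegative cofactor. Splitting one off on the left requires $B$ to be row-dominant, which $p>q$ does not give. Writing $X=(AR)(R^{-1}B)$ generally produces negative entries, and nothing in the process yields a smaller bisymmetric matrix.

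The paper handles this step by explicit elimination:
\begin{itemize}
\item with $a>c$, it solves the system for $e,f$;
\item it gets $x=\frac{(ad-bc)(ah+cg)}{a^2-c^2}$ and $y=\frac{(ad-bc)(ag+ch)}{a^2-c^2}$;
\item it uses $\gcd(x,y)=1$ to obtain $a^2-c^2=(ad-bc)\gcd(ah+cg,ag+ch)$, and deduces $(ad-bc)\mid(cd-ab)$;
\item it uses $e,f\ge0$ and $a>c$ to force $cd=ab$, from which $a=d$, $b=c$, $e=h$, $f=g$ follow.
\end{itemize}
If you want to stay in your eigenvector framework, one workable route is as follows. Since $\gcd(x,y)=1$, the group $\mathbb{Z}^2/X\mathbb{Z}^2$ is cyclic. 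Because $JX=XJ$ and subgroups of a cyclic group are determined by their order, the intermediate lattice $A\mathbb{Z}^2$ is $J$-stable. Hence $U=A^{-1}JA\in GL_2(\mathbb{Z})$ is an involution with $(r,t)U=(r,t)$ and $U(p,q)^T=(p,q)^T$. Positivity of $p,q,r,t$ then forces $U=J$.

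\textbf{The deduction from $p=q$ is also wrong as written.} The equations $a-c=d-b$ and $a+b=c+d$ are the same equation, so subtracting them gives $0=0$, not $b=c$. Likewise, $r=t$ alone gives only $e+g=f+h$, which is the same as $e-f=h-g$. You need both claims at once:
\begin{itemize}
\item $a+b=c+d$ (from $p=q$) together with $a+c=b+d$ (from $r=t$) gives $a=d$ and $b=c$;
\item $e+f=g+h$ together with $e+g=f+h$ gives $e=h$ and $f=g$.
\end{itemize}
This bookkeeping is repairable, since $r=t$ follows from the general claim applied to $X=X^T=B^TA^T$. But that only helps once the key claim is actually proved. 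A minor point: the coprimality condition for $X$ is $\gcd(x^2,y^2)=1$, not $\gcd(xy,x^2)=1$.
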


\begin{proof}
Let $A=\pmx{a}{b}{c}{d}$ and $B=\pmx{e}{f}{g}{h}$. By multiplying $A$ and $B$ and equating the entries to those in $X$, we get
\begin{equation}\label{eqn:sys}
\begin{cases}
ae+bg=cf+dh,\\
af+bh=ce+dg.
\end{cases}
\end{equation}
If $a=c$, then by adding up the equations in \eqref{eqn:sys}, we have $b(g+h)=d(g+h)$. This implies that $b=d$, contradicting that the determinant of $A$ is nonzero. Hence, $a\neq c$. Assume without loss of generality that $a>c$; otherwise, we may switch the rows in $A$ and $X$ by multiplying by the exchange matrix, $J$. 

By treating \eqref{eqn:sys} as a system of linear equations, we can solve for the variables $e$ and $f$ as
\begin{equation}\label{eqn:ef}
\begin{cases}
e=\dfrac{1}{a^2-c^2}\big((ad-bc)h+(cd-ab)g\big),\\
f=\dfrac{1}{a^2-c^2}\big((ad-bc)g+(cd-ab)h\big),
\end{cases}
\end{equation}
and we obtain
\begin{equation}\label{eqn:xy}
\begin{cases}
x=ae+bg=\dfrac{1}{a^2-c^2}(ad-bc)(ah+cg),\\
y=af+bh=\dfrac{1}{a^2-c^2}(ad-bc)(ag+ch).\\
\end{cases}
\end{equation}
Note that $ad-bc>0$ since $x$ and $y$ are nonnegative with at least one of them positive.

Since $\gcd(x,y)=1$, we have from \eqref{eqn:xy} that
\begin{equation}\label{eqn:a^2-c^2first}
a^2-c^2=(ad-bc)\gcd(ah+cg,ag+ch).
\end{equation}
In particular, $(ad-bc)\gcd(g,h)$ divides $a^2-c^2$. Since $(ad-bc)\gcd(g,h)$ divides both $(ad-bc)h$ and $(ad-bc)g$, we deduce from \eqref{eqn:ef} that $(ad-bc)\gcd(g,h)$ divides both $(cd-ab)g$ and $(cd-ab)h$. This implies that $(ad-bc)\gcd(g,h)$ divides $(cd-ab)\gcd(g,h)$, or equivalently $ad-bc$ divides $cd-ab$. Consequently, either $cd-ab=0$, $cd-ab\leq-(ad-bc)$, or $cd-ab\geq ad-bc$.

If $cd-ab\leq-(ad-bc)$, then since both $e$ and $f$ are nonnegative, we have $g=h$ and thus $e=f$, contradicting that the determinant of $B$ is nonzero. If $cd-ab\geq ad-bc$, then $c(b+d)\geq a(b+d)$, contradicting that $a>c$. Hence, $cd-ab=0$, implying that $a^2-c^2$ divides $(ad-bc)\gcd(g,h)$ since both $e$ and $f$ are integers. As a result,
\begin{equation}\label{eqn:a^2-c^2second}
a^2-c^2=(ad-bc)\gcd(g,h).
\end{equation}

From $cd-ab=0$, we have $d=\frac{ab}{c}$, so $a^2-c^2=\big(\frac{a^2b}{c}-bc\big)\gcd(g,h)=(a^2-c^2)\frac{b}{c}\gcd(g,h)$. This implies that $c=b\gcd(g,h)$. Combining with $d=\frac{ab}{c}$, we have $a=d\gcd(g,h)$. Lastly, if we compare \eqref{eqn:a^2-c^2first} and \eqref{eqn:a^2-c^2second}, we get $\gcd(a,c)=1$, so $\gcd(g,h)=1$. Therefore, $a=d$ and $b=c$, and we also have $e=h$ and $f=g$ from \eqref{eqn:ef}, which leads to the conclusion that $A,B\in\Bstar$.
\end{proof}

This result is surprising because bisymmetric matrices are not generally divisor-closed, unless we require that the entries are relatively prime. For example, consider the matrix $\pmx{15}{10}{10}{15}$. This has the non-trivial factorization
\begin{equation*}\label{ex1factor}
\pmx{15}{10}{10}{15}=\pmx{1}{2}{3}{1}\pmx{1}{4}{7}{3},
\end{equation*}
where both factors are atoms by \cref{TheoremBaethAtoms}, doubly-balanced, have no zero entries, and have relatively-prime adjacent entries. However, when the entries of the bisymmetric matrix are relatively prime, this type of factorization is impossible.

Note that $\B^*$ is not a semigroup, since it is not closed under multiplication. However, we have shown that $\B^*$ is a divisor-closed subset of $\mtwo$. Therefore, bisymmetric matrices that factor in $\B^*$ will also factor in $\mtwo$, and matrices that do not factor in $\B^*$ will not factor in $\mtwo$. This allows us to use the relatively simple subset to determine if a matrix factors. We will demonstrate this with an example. 

The group of bisymmetric matrices is isometric to the group of ``perplex numbers'', also called split complex numbers, hyperbolic numbers, or various other names. Pomonarenko proposed the study of ``perplex integers.'' There are two possible definitions but the first, $\mathbb{P}_1$, is isomorphic to the semigroup of bisymmetric integer matrices. Although we are studying the semigroup of bisymmetric matrices with non-negative entries, matrices that do not factor into integer bisymmetric matrices will then not factor into non-negative bisymmetric matrices.

Pomonarenko found a class of  irreducible perplex integers that correspond to bisymmetric integer matrices which do not factor \cite[Theorem 5]{Ponom22}, namely
$$\left\{\pmx{2^n+1}{2^n-1}{2^n-1}{2^n+1}:n\in \mathbb{N}_0\right\}.$$
Note that these entries are relatively prime since they are odd and differ by 2. By \cref{thm:bisym factors}, since any factors must be bisymmetric, these matrices must be atoms in $\mtwo$.

%%%%%%%%%%%%%%%%%%%%%%%%%%%%%%%%%%%%%%%%%%%%%%%%%%%%%%%%%%
\section{Upper Bound on Factorable Matrices in $\mtwo$}\label{sec: upper bound}

We obtain an additional class of atoms by finding an upper bound on factorable matrices in $\mtwo$. Then we will tighten this upper bound for bisymmetric matrices.

\begin{theorem}\label{wxyzupperbound2} Let $X=\pmx{w}{x}{y}{z}\in \mtwo$ such that 
$X$ is doubly balanced and $\gcd(wz, xy)=1.$ If $X$ is not an atom then it must satisfy
$$x,y\le2(z-1)(w-1),$$
$$w, z \leq 2(x-1)(y-1).$$
\end{theorem}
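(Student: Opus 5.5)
The plan is to work directly with the entries of a nontrivial factorization and compare them term by term; the preliminary results will guarantee that every entry of both factors is at least $1$.

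Suppose $X$ is not an atom, so $X=AB$ with $A=\pmx{a}{b}{c}{d}$ and $B=\pmx{e}{f}{g}{h}$ in $\mtwo$, neither a unit. Since $X$ is doubly-balanced with $\gcd(wz,xy)=1$, \cref{LemmaNoZeroes} shows that $A$ and $B$ have no zero entries. Hence $a,b,c,d,e,f,g,h\ge 1$. Multiplying out,
\[
w=ae+bg,\qquad x=af+bh,\qquad y=ce+dg,\qquad z=cf+dh.
\]
Each of the four entries of $X$ is therefore a sum of two positive products. This is the only structural input I expect to need; in particular I do not plan to use the row- and column-balance of the factors from \cref{lem:balanced}.

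The key observation is that subtracting $1$ from such a sum removes at most one of its summands:
\[
w-1\ge \max\{ae,bg\},\qquad z-1\ge \max\{cf,dh\},
\]
\[
x-1\ge \max\{af,bh\},\qquad y-1\ge \max\{ce,dg\}.
\]
For the first inequality, I bound each summand of $x$ by a product of one summand of $w$ and one summand of $z$. Using only that the remaining entries are at least $1$,
\[
af\le (ae)(cf),\qquad bh\le (bg)(dh).
\]
Hence
\[
x=af+bh\le 2\max\{ae,bg\}\max\{cf,dh\}\le 2(w-1)(z-1).
\]
The same argument applies to $y$, via $ce\le (ae)(cf)$ and $dg\le (bg)(dh)$.

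The reverse inequalities follow symmetrically, with the roles of diagonal and off-diagonal entries exchanged. Here we use
\[
ae\le (af)(ce),\qquad bg\le (bh)(dg)
\]
to get $w\le 2(x-1)(y-1)$. Likewise
\[
cf\le (af)(ce),\qquad dh\le (bh)(dg)
\]
give $z\le 2(x-1)(y-1)$.

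The only step I would flag as the potential obstacle is making sure the ``$-1$'' costs nothing. The naive estimate $w-1\ge w/2$ loses a factor of $2$ and breaks the bound. Instead, one must note that the other summand is at least $1$, so $w-1$ dominates the larger of its two summands outright. This is exactly where the absence of zero entries from \cref{LemmaNoZeroes} is essential. Beyond that, the argument consists of the elementary inequalities above, and no case analysis on the orientation of $X$ is required.
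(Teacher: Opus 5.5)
Your proof is correct and is essentially the paper's argument: both use \cref{LemmaNoZeroes} to make every entry of both factors positive, then bound the summands of $x$ and $y$ by $(w-1)(z-1)$. The paper bounds single entries (e.g.\ $a\le ae\le w-1$, $f\le cf\le z-1$) where you bound products ($af\le (ae)(cf)$), which is the same estimate, and you also write out the reverse inequalities $w,z\le 2(x-1)(y-1)$, which the paper's proof leaves implicit.
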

\begin{proof}
    Let $X=\pmx{w}{x}{y}{z}\in \mtwo$ be doubly-balanced with $\gcd(wz, xy)=1.$ Suppose $X=\pmx{a}{b}{c}{d}\pmx{e}{f}{g}{h}.$ Then observe the equations
    \begin{align}
        w&=ae+bg \label{wxyzupperbound|w=}\\
        x&=af+bh \label{wxyzupperbound|x=} \\
        y&=ce+dg \label{wxyzupperbound|y=} \\
        z&=cf+dh \label{wxyzupperbound|z=}.   
    \end{align}

We can use the fact that from Lemma \ref{LemmaNoZeroes} all variables are greater than zero  to show that
$$a\le ae=w-bg \le w-1$$ using \cref{wxyzupperbound|w=}.  The same equation shows that $b,g,e\le w-1$ by similar logic. Similarly, \cref{wxyzupperbound|z=} shows that $c,d,f,h\le z-1$. We can apply these inequalities to \cref{wxyzupperbound|x=,wxyzupperbound|y=} to show 
   $$x=af+bh \le (w-1)(z-1)+(w-1)(z-1) \le 2(w-1)(z-1)$$
and 
$$y=ce+dg \le (w-1)(z-1)+(w-1)(z-1) \le 2(w-1)(z-1).$$
\end{proof}

One consequence of \cref{wxyzupperbound2} is that any doubly-balanced matrix with relatively prime adjacent entries which does not satisfy the above bounds is necessarily an atom. This gives an entire class of atoms; if we fix, for example, $w$ and $z$, then we can generate atoms by making $x$ or $y$ sufficiently large.

\begin{cor}\label{cor: upper bound atoms}
    Let $X=\pmx{w}{x}{y}{z}\in \mtwo$ such that $w\le z<x,y$ and $\gcd(wz, xy)=1.$ If $x>2(z-1)(w-1)$ or $y>2(z-1)(w-1)$, then $X$ and its associates are atoms in $\mtwo$.
\end{cor}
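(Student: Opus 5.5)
This follows from the contrapositive of \cref{wxyzupperbound2}, once we check that $X$ meets its hypotheses. The condition $w\le z<x,y$ gives $w<x$ and $z<y$: the diagonal entries of the first row are smaller than the off-diagonal ones in one coordinate and larger in none. More precisely, the first row $(w,x)$ and second row $(y,z)$ satisfy $w<y$ and $x>z$, so neither row is dominant. Similarly, the first column $(w,y)$ and second column $(x,z)$ satisfy $w<x$ and $y>z$, so neither column is dominant. Hence $X$ is doubly-balanced.

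We also need $X\in\mtwo$ with no zero entries. Since $\gcd(wz,xy)=1$, we cannot have $w=0$ unless $xy=1$, which is impossible because $x>z\ge w$ would force $x=1$, $z=0$, and then $\gcd(0,y)=y>1$. In any case, the hypothesis $X\in\mtwo$ is given, and if $w=0$ the bound $2(z-1)(w-1)$ becomes negative. To keep things clean, I would note that $w\ge 1$: otherwise $\gcd(0,xy)=xy=1$ gives $x=y=1$, and then $z<1$ forces $z=0$ and $\det X=-1$, contradicting $w\le z<x$ only in a trivial way. I expect the $w=0$ edge case to be the only fiddly point, and I would dispatch it directly in this manner.

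Now suppose $X$ is not an atom. By \cref{wxyzupperbound2}, $x\le 2(z-1)(w-1)$ and $y\le 2(z-1)(w-1)$. This contradicts the hypothesis that one of $x,y$ exceeds $2(z-1)(w-1)$, so $X$ is an atom.

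Finally, the associates of $X$ are $X$ multiplied by the units $J$. If an associate $UXV$, with $U,V\in\{I,J\}$, factored as $AB$ with non-units, then $X=(UA)(BV)$ would be a factorization of $X$ into non-units, since $UA$ and $BV$ are non-units whenever $A$ and $B$ are. Hence every associate of $X$ is also an atom.
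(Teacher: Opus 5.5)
Your proof is correct and matches the paper's, which is just the contrapositive of \cref{wxyzupperbound2}; your checks that $w\le z<x,y$ gives double-balance and that $UXV=AB$ yields $X=(UA)(BV)$ spell out what the paper leaves implicit. The $w=0$ paragraph, however, should be deleted rather than kept as written: its claimed contradictions are false, since $w=0$ forces exactly $X=J=\pmx{0}{1}{1}{0}$ (with $y=1$, so $\gcd(0,y)=1$), which is a legitimate element of $\mtwo$ satisfying $w\le z<x,y$ and the gcd condition, and it is excluded only because $x=y=1\le 2=2(z-1)(w-1)$ fails the bound hypothesis---and \cref{wxyzupperbound2} never needed $w\ge 1$ in the first place.
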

\begin{proof}
This is the contrapositive of \cref{wxyzupperbound2}.
\end{proof}

We will consider a subset of bisymmetric matrices with relatively prime adjacent entries,
\[\B^*=\left\{\pmx{x}{y}{y}{x}\in \B: \gcd(x,y)=1\right\}.\]
Note that $\B^*$ is not a subsemigroup as it is not closed under multiplication. Further, not every element of $\B^*$ is an atom; for example, $\pmx{5}{4}{4}{5}\in \B^*$ but is not an atom. However, the condition that $x$ and $y$ are relatively prime is a necessary condition to being an atom.  The upper bound in \cref{wxyzupperbound2} can be improved when the matrix is bisymmetric.

\begin{theorem}\label{cor: upperboundBisym} Let $X=\pmx{x}{y}{y}{x} \in \Bstar$. If $X$ is not an atom then it must satisfy
$$x\leq1+\frac{y^2}{4}\text{ and }y\leq1+\frac{x^2}{4}.$$
Furthermore, these bounds are sharp, i.e., there exists $X$ achieving these bounds.
\end{theorem}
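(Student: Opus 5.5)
By \cref{thm:bisym factors}, if $X$ is not an atom then $X=AB$ with $A=\pmx{a}{b}{b}{a}$ and $B=\pmx{e}{f}{f}{e}$ both in $\Bstar$. By \cref{LemmaNoZeroes}, all of $a,b,e,f$ are positive. The product is
\[
x=ae+bf,\qquad y=af+be .
\]
The plan is to use the standard trick for bisymmetric matrices, namely diagonalizing by the sum and difference. We have
\[
x+y=(a+b)(e+f),\qquad x-y=(a-b)(e-f).
\]
Set $s=a+b$, $t=|a-b|$, $u=e+f$ and $v=|e-f|$. Then $x+y=su$ and $|x-y|=tv$. Since $\det A\neq 0$ and $\det B\neq0$, we have $t,v\ge1$. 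Since all four entries are positive, $t\le s-2$ and $v\le u-2$.

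To bound $x$ in terms of $y$, take the case $x>y$ (otherwise $x\le y\le 1+y^2/4$ trivially, because $1+y^2/4-y=(y/2-1)^2\ge0$). Then
\[
2x=su+tv,\qquad 2y=su-tv .
\]
We want $x\le 1+y^2/4$, i.e. $8x\le 8+(su-tv)^2$ with $x$ as above. So we need to show
\[
4(su+tv)\le 8+(su-tv)^2 .
\]
With $P=su$ and $Q=tv$, this reads $(P-Q)^2-4(P+Q)+8\ge0$, subject to constraints coming from $s\ge t+2$ and $u\ge v+2$. These give
\[
P\ge (t+2)(v+2)=Q+2(t+v)+4\ge Q+4\sqrt{Q}+4=(\sqrt Q+2)^2 .
\]
Write $D=P-Q$. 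The inequality becomes $D^2-4D-8Q+8\ge0$. Since $D\ge 4\sqrt Q+4$ and the quadratic $D^2-4D$ is increasing for $D\ge2$, it suffices to check the value at $D=4\sqrt Q+4$:
\[
(4\sqrt Q+4)^2-4(4\sqrt Q+4)-8Q+8 = 8Q+16\sqrt Q+8>0 .
\]
This gives $x\le 1+y^2/4$. It is in fact not tight, which suggests the true extremal case should be checked against small examples. The symmetric bound $y\le1+x^2/4$ follows by swapping roles: multiplying by $J$ exchanges $x$ and $y$, and the statement is symmetric.

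I expect the main obstacle to be sharpness. The AM–GM step $2(t+v)\ge4\sqrt{Q}$ and the slack above make the constant look loose, so I will look for explicit near-extremal examples. Taking $t=v=1$ and $s=u=3$ gives $A=B=\pmx{2}{1}{1}{2}$ and $X=\pmx{5}{4}{4}{5}$, and here $5=1+4^2/4$, so equality holds. More generally $A=B=\pmx{k+1}{k}{k}{k+1}$ gives
\[
X=\pmx{2k^2+2k+1}{2k^2+2k}{2k^2+2k}{2k^2+2k+1},
\]
which is not extremal for larger $k$. So for the sharpness family I will instead try $A=\pmx{k+1}{k}{k}{k+1}$ and $B=\pmx{k+1}{k}{k}{k+1}$ with varying pairings, and verify the gcd condition. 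At minimum, the example $\pmx5445$ (and its associate $\pmx4554$) attains both bounds, giving the stated sharpness. Before finalizing, I would recheck the algebra reducing $x\le1+y^2/4$ to the $P,Q$ inequality, since the margin there determines whether the family or only the single example is extremal.
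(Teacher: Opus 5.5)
\textbf{There is a genuine gap.} Your reduction of the target inequality is off by a factor of $2$ in the quadratic term, so as written you have not proved the stated bound. Since $2y = su - tv$, we have $y^2/4 = (su-tv)^2/16$, and $x \le 1 + y^2/4$ is equivalent to
\[
8(su+tv) \le 16 + (su-tv)^2, \qquad\text{i.e.}\qquad (P-Q)^2 - 8(P+Q) + 16 \ge 0 .
\]
The inequality you actually verified, $4(su+tv) \le 8 + (su-tv)^2$, reads $8x \le 8 + 4y^2$, i.e.\ $x \le 1 + y^2/2$. This is strictly weaker than the claim.

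The ``slack'' you then observed comes from this slip, and your own example exposes it. For $\pmx{5}{4}{4}{5}$ (so $P=9$, $Q=1$) the true bound holds with equality, so no correct chain of inequalities can have room to spare there. The slip also sent your sharpness search the wrong way. The family $A=B=\pmx{k+1}{k}{k}{k+1}$ is not extremal beyond $k=1$, and pairing these matrices with each other or their associates will not change that. The extremal factors have minimum entry $1$.

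\textbf{The repair is short, and it makes your method exactly sharp.} With $D = P-Q$, the correct target is
\[
D^2 - 8D - 16Q + 16 = (D-4)^2 - 16Q \ge 0 .
\]
Your constraint $P \ge (t+2)(v+2)$ gives $D - 4 \ge 2(t+v) \ge 0$. Hence $(D-4)^2 \ge 4(t+v)^2 \ge 16tv = 16Q$.

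Equality forces $s=t+2$, $u=v+2$ and $t=v$, i.e.\ $\min(a,b)=\min(e,f)=1$ and $|a-b|=|e-f|$. This yields $\pmx{1}{k}{k}{1}^2=\pmx{1+k^2}{2k}{2k}{1+k^2}$, which lies in $\Bstar$ for even $k$. Up to associates, this is exactly the paper's sharpness family, and $\pmx{5}{4}{4}{5}$ is the case $k=2$. That single example and its associate already suffice for the literal existence claim.

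\textbf{Comparison with the paper.} Once corrected, your argument is a genuinely different route. The paper uses the notation $B=\pmx{c}{d}{d}{c}$ and writes $ac = x/2+t$, $bd = x/2 - t$. It then bounds $y = ad + \frac{1}{ad}\bigl(\frac{x^2}{4}-t^2\bigr)$ by maximizing $ad + \frac{x^2}{4ad}$ over $1 \le ad \le x^2/4$. You instead diagonalize via $x \pm y = (a\pm b)(e\pm f)$ and apply AM--GM. Your version identifies the equality case automatically, whereas the paper has to produce its extremal examples separately.
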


\begin{proof}
By Theorem~\ref{thm:bisym factors}, let $X=AB$ for some nonunits $A=\pmx{a}{b}{b}{a}$ and $B=\pmx{c}{d}{d}{c}$ in $\Bstar$. Note that $a,b,c,d>0$ by Lemma~\ref{LemmaNoZeroes}. Now, $ac+bd=x$, so we let $ac=\dfrac{x}{2}+t$ and $bd=\dfrac{x}{2}-t$ for some real number $t\in\left[1-\dfrac{x}{2},\dfrac{x}{2}-1\right]$. This implies that $ad\leq abcd=\dfrac{x^2}{4}-t^2\leq\dfrac{x^2}{4}$, $c=\dfrac{1}{a}\left(\dfrac{x}{2}+t\right)$, and $b=\dfrac{1}{d}\left(\dfrac{x}{2}-t\right)$. Hence,
$$y=ad+bc=ad+\frac{1}{ad}\left(\frac{x^2}{4}-t^2\right)\leq ad+\frac{1}{ad}\frac{x^2}{4}\leq1+\frac{x^2}{4},$$
where the last inequality is due to $1+\dfrac{x^2}{4}-ad-\dfrac{1}{ad}\dfrac{x^2}{4}=(ad-1)\left(\dfrac{1}{ad}\dfrac{x^2}{4}-1\right)\geq0$. (Note that $ad\leq \frac{x^2}{4}$ implies that $\frac{1}{ad}\geq \frac{4}{x^2}$, and thus $\frac{1}{ad}\frac{x^2}{4}\geq 1$.) The inequality $x\leq1+\dfrac{y^2}{4}$ follows similarly by considering $\pmx{y}{x}{x}{y}=\pmx{b}{a}{a}{b}\pmx{c}{d}{d}{c}$. 

Lastly, we will show that the bounds are sharp. When $x$ is a positive multiple of $4$, then $x$ and $1+\frac{x^2}{4}$ are non-negative integers and relatively prime. Then,
$$\pmx{x}{1+\frac{x^2}{4}}{1+\frac{x^2}{4}}{x}=\pmx{1}{\frac{x}{2}}{\frac{x}{2}}{1}\pmx{\frac{x}{2}}{1}{1}{\frac{x}{2}}\text{ and }\pmx{1+\frac{y^2}{4}}{y}{y}{1+\frac{y^2}{4}}=\pmx{\frac{y}{2}}{1}{1}{\frac{y}{2}}\pmx{\frac{y}{2}}{1}{1}{\frac{y}{2}}$$
achieve these upper bounds.
\end{proof}

After establishing these upper bounds, we can conclude that any matrix which violates these upper bounds is an atom.

\begin{cor}\label{thm:boundBisym} Let $X=\pmx{x}{y}{y}{x} \in \Bstar$. If $\max\{x,y\}> 1+ \frac{(\min\{x,y\})^2}{4}$, then $X$ is an atom in $\mtwo$.
\end{cor}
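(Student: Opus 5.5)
The corollary is the contrapositive of \cref{cor: upperboundBisym}, so the plan is simply to unpack that theorem. Let $X=\pmx{x}{y}{y}{x}\in\Bstar$ and set $m=\min\{x,y\}$ and $M=\max\{x,y\}$. Suppose, for contradiction, that $X$ is not an atom in $\mtwo$. Then \cref{cor: upperboundBisym} gives both $x\le 1+\frac{y^2}{4}$ and $y\le 1+\frac{x^2}{4}$.

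Next I would split according to which entry is larger. If $x\ge y$, then $M=x$ and $m=y$, and the first inequality reads $M\le 1+\frac{m^2}{4}$. If $y> x$, then $M=y$ and $m=x$, and the second inequality gives the same conclusion. In either case $M\le 1+\frac{m^2}{4}$, which contradicts the hypothesis $\max\{x,y\}>1+\frac{(\min\{x,y\})^2}{4}$. Hence $X$ is an atom in $\mtwo$.

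I see no genuine obstacle here. The one point to confirm is that \cref{cor: upperboundBisym} already handles everything nontrivial: it applies to every element of $\Bstar$ and asserts non-atomicity in $\mtwo$ itself, not only within $\Bstar$. Its proof achieves this by invoking \cref{thm:bisym factors} to force any factorization in $\mtwo$ to have bisymmetric factors, and \cref{LemmaNoZeroes} to make those factors' entries positive. So no additional case analysis is needed; for instance, one need not separately check that $X$ is doubly-balanced.
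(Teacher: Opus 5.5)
Your proposal is correct and is essentially the paper's argument: the paper states the corollary as a direct consequence (the contrapositive) of \cref{cor: upperboundBisym}. Your case split on whether $x\ge y$ or $y>x$ is exactly the bookkeeping the paper leaves implicit.
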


\begin{proof}
This is a direct result of \cref{cor: upperboundBisym}.
\end{proof}

For a fixed minimum value, this theorem establishes a large class of atoms when the maximum value is sufficiently large and the condition that $\gcd(x,y)=1$ is met. In the next section, we algorithmically search for factors to determine which bisymmetric matrices are atoms up to the upper bound in  \cref{thm:boundBisym}. This gives us a complete description of the atoms and non-atoms, up to the minimum value of our search.

\section{Factor Search Algorithm}\label{sect: algorithm}

Let $X=\pmx{x}{y}{y}{x}\in \Bstar$ be a bisymmetric matrix which factors into $X=AB$, where $A, B \in \mtwo$. Then by \cref{thm:bisym factors}, both factors are bisymmetric, namely $A=\pmx{a}{b}{b}{a}$ and $B=\pmx{c}{d}{d}{c}$, so that
\begin{equation}\label{eq:bisymfactor}
\pmx{x}{y}{y}{x}= \pmx{a}{b}{b}{a}\pmx{c}{d}{d}{c}.
\end{equation}

Note that by \cref{LemmaNoZeroes}, $a,b,c,d>0$ since $X \in \Bstar$. We have created an algorithm to search for possible factorizations. From equation \eqref{eq:bisymfactor}, we know that $x=ac+bd$ and $y=ad+bc$. Rather than search through all possible entries for $A$ and $B$, which would require four possible entries, we can optimize this algorithm by considering possible non-negative integer partitions of $x$. The idea is  to fix $x$ and search possible values of $ac$, which then determines $bd$. We can then check if these values are divisible by $a$ and $b$, respectively, which then determine $c$, $d$, and $y$. Therefore we only need to search through three different variables: $ac$, $a$, and $b$, which minimizes the computational time.

Without loss of generality, we assume that $x<y$. This will not miss any possible factorizations, because if $x>y$, then the associate matrix $\pmx{y}{x}{x}{y}$ will be detected by the algorithm. Note that if $X=AB$ factors into non-negative integer matrices, then multiplying by the exchange matrix $J = \pmx{0}{1}{1}{0}$, the matrix $\pmx{y}{x}{x}{y}=JX=(JA)B$ also factors into non-negative integer matrices.  Note that for $x=1,2,3$, these matrices are already known to be atoms in $\mtwo$ by Theorem \ref{TheoremBaethAtoms}.

Using the above restrictions, we now give an outline of the code. The complete code, implemented in Python, can be found  at \cite{BisymData26}.

\bigskip

\begin{verbatim}
FOR x in minimum values:
    FOR AC in 1 to x-1:
        DEFINE BD=x-AC
        FOR a in 1 to AC:
            IF AC is divisible by a:
                DEFINE c=AC/a
                FOR b in 1 to BD:
                    IF BD is divisible by b:
                        DEFINE d=BD/b
                        DEFINE y=a*d-b*c
                        IF x and y are relatively prime AND x<y
                            Store factorization
\end{verbatim}

The factorizations of matrices with minimum entry up to 4000 can be found at \cite{BisymData26}. For brevity's sake, only the matrices $\pmx{x}{y}{y}{x}$ where $x<y$ are listed, although the associate matrix $\pmx{y}{x}{x}{y}$ will also have a factorization. If a matrix with minimum entry up to 4000 or its associate does not appear in this list, then it is an atom.  

\begin{theorem}\label{thm: algorithm}
Let $X=\pmx{x}{y}{y}{x} \in \Bstar$ with $\min\{x,y\}\leq 4000$. The matrices listed in \cite{BisymData26} (and their associates) are not atoms in $\mtwo$. If $X$ and its associate matrix do not appear in \cite{BisymData26}, then $X$ is an atom.
\end{theorem}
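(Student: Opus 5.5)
The statement has two parts, and I would prove them separately. The first part is \emph{soundness}: every matrix in \cite{BisymData26} really factors. The second is \emph{completeness}: every non-atom $X\in\Bstar$ with $\min\{x,y\}\le 4000$ (or its associate) shows up in the list.

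Soundness is immediate from the construction. Each stored entry comes with explicit $a,b,c,d\ge 1$ such that
\[
\pmx{x}{y}{y}{x}=\pmx{a}{b}{b}{a}\pmx{c}{d}{d}{c},
\]
where $x=ac+bd$ and $y=ad+bc$. The off-diagonal entry must be computed as $ad+bc$, not as written in the outline. Neither factor is a unit, since both have all entries positive. So each listed matrix is not an atom. Multiplying on the left by $J$ gives a factorization of its associate $\pmx{y}{x}{x}{y}$ as well.

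For completeness, let $X=\pmx{x}{y}{y}{x}\in\Bstar$ be a non-atom, and suppose first that $x=\min\{x,y\}\le 4000$. Since $\gcd(x,y)=1$ forces $x\neq y$, we have $x<y$.
\begin{enumerate}
\item By \cref{thm:bisym factors}, $X=AB$ with $A=\pmx{a}{b}{b}{a}$ and $B=\pmx{c}{d}{d}{c}$ both in $\Bstar$.
\item Because $X$ is doubly-balanced with relatively prime adjacent entries, \cref{LemmaNoZeroes} gives $a,b,c,d\ge 1$.
\item Set $AC=ac$ and $BD=bd$. Then $AC+BD=x$ with $1\le AC\le x-1$ and $BD=x-AC$.
\item Also $a\mid AC$ with $1\le a\le AC$, and $b\mid BD$ with $1\le b\le BD$.
\item Hence the loops of the algorithm, run at minimum value $x$, reach exactly the tuple $(AC,a,b)$. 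They recover $c=AC/a$ and $d=BD/b$, and then recover $y=ad+bc$.
\item The final test passes, since $\gcd(x,y)=1$ and $x<y$, so $X$ is stored.
\end{enumerate}
If instead $y<x$, apply the same argument to the associate $JX=(JA)B$. Here $JA=\pmx{b}{a}{a}{b}$ is again bisymmetric, so the associate is stored. Therefore any $X$ such that neither $X$ nor its associate appears in the list is an atom.

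The main obstacle is making sure the finite search is genuinely exhaustive. For each fixed $x$ the loops are finite, but there are infinitely many candidate $y$. The argument above handles this, because every factorization is parametrized by $(AC,a,b)$ determined by $x$ alone, and these range over a finite set. As a consistency check, any factorable $X$ has $y\le 1+x^2/4$ by \cref{cor: upperboundBisym}. So matrices beyond that bound are atoms by \cref{thm:boundBisym}, in agreement with their absence from the list.

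The remaining points are computational rather than mathematical. One is the correctness of the implementation, including the $ad+bc$ formula noted above. The other is that the run covered every $x$ from $1$ to $4000$; the cases $x\le 3$ are consistent with \cref{TheoremBaethAtoms}. Both would be verified by inspecting the published code.
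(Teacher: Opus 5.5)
Your proposal is correct and follows the same route as the paper's justification in the section on the factor search algorithm. \cref{thm:bisym factors} forces both factors to be bisymmetric, and \cref{LemmaNoZeroes} makes $a,b,c,d\ge 1$, so the finite search over $(ac,a,b)$ with $x<y$ is exhaustive (the case $y<x$ goes through $JX=(JA)B$), and each stored tuple is an explicit factorization into non-units. You are also right that the pseudocode line \texttt{y=a*d-b*c} must compute $y=ad+bc$. Strictly, the statement also needs $\min\{x,y\}\ge 1$, since $I,J\in\Bstar$ are units and not atoms.
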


In Appendix \ref{sec:Appendix}, we list the factorable bisymmetric matrices with relatively prime entries and minimum up to 42. The bisymmetric non-atoms with minimum up to 12 and their factorizations appear in Table \ref{table: main}. The bisymmetric matrices from minimum 13 to 42 that are not atoms are listed in Table \ref{table: extended} (without the factorization). For brevity, only the matrices with $x<y$ are listed. All other bisymmetric matrices with relatively prime entries and minimum entry up to 42 are atoms.

\bibliography{References.bib}

\appendix

\section{Tables of Factorizations}\label{sec:Appendix}

\begin{table}[H]
\caption{Bisymmetric Matrices that Factor and satisfy $\gcd(x,y)=1$}
$\begin{array}{|c|c|c|}
    \hline
    \text{Minimum value}& \text{Non-atoms satisfying }\gcd(x,y)=1& \text{Factors (up to associates)}\\
    \hline
   x=4& \pmx{4}{5}{5}{4} & \pmx{1}{2}{2}{1} \pmx{2}{1}{1}{2}\\
    x=5 &\pmx{5}{7}{7}{5} & \pmx{1}{2}{2}{1} \pmx{3}{1}{1}{3}\\
    x=6 & \text{none} &\\
    x=7 & \pmx{7}{8}{8}{7} & \pmx{1}{2}{2}{1}\pmx{3}{2}{2}{3}\\
        & \pmx{7}{11}{11}{7}& \pmx{1}{2}{2}{1}\pmx{5}{1}{1}{5}\\
        & \pmx{7}{13}{13}{7}& \pmx{1}{3}{3}{1}\pmx{4}{1}{1}{4}\\
    x=8 & \pmx{8}{13}{13}{8} & \pmx{1}{2}{2}{1}\pmx{6}{1}{1}{6}\\
        & \pmx{8}{17}{17}{8}&\pmx{1}{4}{4}{1}\pmx{4}{1}{1}{4}\\
    x=9 & \pmx{9}{11}{11}{9} & \pmx{1}{3}{3}{1}\pmx{3}{2}{2}{3}\\
        & \pmx{9}{19}{19}{9} & \pmx{1}{3}{3}{1}\pmx{6}{1}{1}{6}\\
    x=10& \pmx{10}{11}{11}{10} & \pmx{1}{2}{2}{1}\pmx{4}{3}{3}{4}\\
        &\pmx{10}{17}{17}{10}& \pmx{1}{2}{2}{1} \pmx{8}{1}{1}{8}\\
    x=11& \pmx{11}{13}{13}{11}&\pmx{1}{2}{2}{1}\pmx{5}{3}{3}{5}\\
        & \pmx{11}{14}{14}{11}&\pmx{1}{4}{4}{1}\pmx{3}{2}{2}{3}\\
        & \pmx{11}{16}{16}{11}&\pmx{1}{2}{2}{1}\pmx{7}{2}{2}{7}\\
        &\pmx{11}{17}{17}{11} & \pmx{1}{3}{3}{1}\pmx{5}{2}{2}{5}\\
        &\pmx{11}{19}{19}{11} & \pmx{1}{2}{2}{1}\pmx{9}{1}{1}{9}\\
        &\pmx{11}{25}{25}{11} & \pmx{1}{3}{3}{1}\pmx{8}{1}{1}{8}\\
        & \pmx{11}{29}{29}{11} &\pmx{1}{4}{4}{1}\pmx{7}{1}{1}{7}\\
        & \pmx{11}{31}{31}{11} & \pmx{1}{5}{5}{1}\pmx{6}{1}{1}{6}\\
    x=12& \pmx{12}{13}{13}{12} & \pmx{2}{3}{3}{2}\pmx{3}{2}{2}{3}\\
        & \pmx{12}{37}{37}{12}& \pmx{1}{6}{6}{1} \pmx{6}{1}{1}{6}\\

        \hline
\end{array}$
\label{table: main}
\end{table}

\newpage

\begin{table}[H]

\caption{$x,y$ Values of Bisymmetric Matrices that Factor and satisfy $\gcd(x,y)=1$}
$\begin{array}{|c|l|}
\hline
x&y\\
\hline
13 & 14, 15, 17,20,22,23,31,37,41,43\\
14 & 19,25,41\\
15 & 29, 37\\
16 & 17, 19, 23, 29, 49, 61, 65\\
17 & 18, 19, 22, 23, 25, 27, 28, 31, 32, 35, 37, 38, 43,  53, 61, 67, 71, 73\\
18 & 73\\
19 & 20, 21, 23, 25, 26, 29, 31, 32, 33, 35, 37, 41, 44, 46, 47, 49, 61, 71, 79, 85, 89, 91\\
20 & 29, 31, 37, 97, 101\\
21 & 23, 29, 31, 47, 55, 109\\
22 & 23, 27, 29, 35, 41, 43, 73, 97, 113\\
23 & 25, 26, 27, 28, 29, 31, 32, 33, 34, 37, 40, 43, 45, 47, 53, 58, 61, 62, 65, 67, 68, 77, 91, 103, 113, 121, 127, 131, 133\\
24 & 25, 31, 109, 145\\
25 & 26, 27, 29, 31, 32, 38, 41, 43, 44, 47, 51, 52, 53, 59, 67, 74, 77, 79, 101, 127, 137, 151, 157\\
26 & 29, 31, 37, 43, 49, 51, 59, 89, 121, 145, 161\\
27 & 28, 29, 38, 41, 49, 65, 73, 83, 92, 127, 163, 181\\
28 & 29, 37, 41, 47, 53, 67, 97, 181, 193, 197\\
29 & 31, 34, 36, 37, 39, 40, 41, 43, 46, 47, 49, 52, 55, 56, 59, 62, 63, 69, 71, 73, 79, 86, 92, 97, 101, 104, 106, 107, 121,\\
&139, 155, 169, 181, 191, 199, 205, 209, 211\\
30 & 217\\
31 & 32, 34, 35, 37, 38, 39, 41, 44, 45, 46, 47, 49, 50, 53, 56, 59, 61, 64, 67, 69, 73, 77, 79, 81, 83, 85, 86, 94, 101, 107,\\
&109, 112, 116, 119, 121, 122, 131, 151, 169, 185, 199, 211, 221, 229, 235, 239, 241\\
32 & 33, 37, 43, 49, 53, 55, 59, 61, 67, 83, 87, 113, 157, 193, 221, 241, 253, 257\\
33 & 35, 37, 43, 47, 58, 59, 67, 83, 91, 128, 137, 163, 217, 271\\
34 & 35, 41, 43, 47, 53, 59, 61, 65, 83, 91, 99, 121, 169, 209, 241, 265, 281\\
35 & 37, 41, 43, 46, 52, 53, 57, 58, 61, 64, 67, 73, 79, 81, 89, 97, 101, 103, 127, 134, 149, 151, 152, 197, 251, 277, 307\\
36 & 41, 49, 85, 181, 289, 325\\
37 & 38, 39, 40, 41, 43, 44, 47, 48, 50, 51, 53, 55, 56, 58, 59, 61, 62, 63, 65, 67, 68, 71, 73, 79, 82, 87, 88, 89, 93, 95,\\
&103, 107, 113, 115, 117, 118, 128, 133, 137, 145, 152, 158, 161, 163, 167, 170, 172, 173, 187, 211, 233, 253, 271,\\
&287, 301, 313, 323, 331, 337, 341, 343\\
38 & 39, 43, 47, 49, 53, 55, 61, 67, 73, 77, 83, 107, 115, 123, 137, 193, 241, 281, 313, 337, 353\\
39 & 41, 46, 49, 53, 56, 59, 61, 77, 85, 94, 101, 109, 137, 164, 191, 199, 271, 361, 379\\
40 & 41, 47, 51, 53, 59, 71, 77, 79, 103, 131, 257, 301, 337, 397, 401\\
41 & 43, 44, 46, 47, 49, 50, 51, 52, 54, 55, 58, 59, 61, 63, 64, 67, 69, 70, 71, 73, 74, 75, 76, 79, 83, 85, 89, 91, 92, 95,\\
&99, 104, 106, 107, 109, 113, 115, 119, 121, 129, 133, 134, 139, 141, 143, 146, 149, 157, 167, 176, 181, 184, 191,\\
&197, 202, 206, 209, 211, 212, 239, 265, 289, 311, 331, 349, 365, 379, 391, 401, 409, 415, 419, 421
\\
42 & 43, 53, 361, 433\\
\hline
\end{array}$
\label{table: extended}
\end{table}

\end{document}